\documentclass[10pt, article]{amsart}

\usepackage{ae} 
\usepackage[T1]{fontenc}
\usepackage[cp1250]{inputenc}
\usepackage{amsmath}
\usepackage{amssymb, amsfonts,amscd,verbatim}
\usepackage{mathtools}
\usepackage{MnSymbol}

\usepackage[normalem]{ulem}
\usepackage{hyperref}
\usepackage{indentfirst}
\usepackage{latexsym}
\input xy
\xyoption{all}

\usepackage{amsmath}    

\theoremstyle{plain}
\newtheorem{Pocz}{Poczatek}[section]
\newtheorem{Proposition}[Pocz]{Proposition}

\newtheorem{Theorem}[Pocz]{Theorem}
\newtheorem{Corollary}[Pocz]{Corollary}

\newtheorem{Lemma}[Pocz]{Lemma}

\newtheorem{Example}[Pocz]{Example}

\theoremstyle{definition}
\newtheorem{Definition}[Pocz]{Definition}

\theoremstyle{remark}
\newtheorem{Remark}[Pocz]{Remark}

\errorcontextlines=0
\numberwithin{equation}{section}

\author{Thomas ~ Weighill}
\address{University of Tennessee, Knoxville, USA}
\email{tweighil@vols.utk.edu}

\title{On spaces with connected Higson coronas}

\date{ \today
}
\keywords{}

\subjclass[2000]{51F99, 18F99}



\begin{document}

\maketitle

\begin{abstract}
In this paper, we characterise metric spaces which have topologically connected Higson coronas. The characterisation is given by a natural categorical condition applied in the coarse category. We also give a characterisation in terms of coarse cohomology, and consider the special case of finitely generated groups and the more general case of abstract coarse spaces. Along the way, we exhibit some connections to the notion of $\omega$-excisive decomposition introduced by Higson, Roe and Yu, and give a categorical characterisation of such decompositions.  
\end{abstract}

\tableofcontents

\section{Introduction}
It is a well-known fact that for any metric space $X$ with distance function $d$, the distance function 
$$
\rho(x,y) = \mathsf{min}(d(x,y), 1)
$$
gives the same topology on $X$ as $d$ does. We may thus say that topology involves ``small-scale'' properties of metric spaces, since it ignores large distances. Recently, there has been some interest in considering ``large-scale'', or ``coarse'', properties of metric spaces, motivated by geometric group theory (beginning with Gromov~\cite{Gromov93}) and the study of geometric and topological invariants of manifolds (see for example~\cite{RoeIndex}). Also of note is a result of Yu~\cite{Yu98}, who proved the Novikov Conjecture for finitely generated groups whose classifying space has the homotopy type of a finite CW complex and which, as metric spaces with the word-length metric, have finite asymptotic dimension. Asymptotic dimension is a coarse notion of dimension introduced by Gromov~\cite{Gromov93}.

In addition to studying coarse structures directly, two important ways to study the coarse properties of metric spaces are as follows:
\begin{enumerate}
\item studying metric spaces as objects in the appropriate category, for example the coarse category (defined in the next section), and
\item associating to each metric space a topological space or algebra which captures coarse properties of the space, for example the Higson corona~\cite{Higson90} or the uniform Roe algebra~\cite{RoeAnIndex, RoeCoarse} (see also~\cite{RoeIndex, RoeLectures}).
\end{enumerate}
In this paper, we will exhibit a connection between a condition stated in the language of the coarse category and a topological condition on the Higson corona. For a proper metric space $X$, the \emph{Higson corona} of $X$, denoted by $\nu X$, is a compact topological space which captures coarse properties of $X$. It was introduced in~\cite{Higson90}, motivated by considerations in index theory, and is defined for proper metric spaces as the complement of $X$ in the so-called \emph{Higson compactification} of $X$. While the Higson compactification is only defined for proper metric spaces (or for proper coarse spaces in the sense of~\cite{RoeLectures}), the Higson corona can be defined for arbitrary metric spaces (and arbitrary coarse spaces). We recall this definition here, following~\cite{RoeLectures}. 

\begin{Definition}
Let $X$ be a metric space. Given a bounded map $f: X \rightarrow \mathbb{C}$ (not necessarily continuous) to the complex numbers, $f$ is said to be \emph{slowly oscillating} if for every $\varepsilon > 0$ and $R > 0$, there is a bounded set $B$ such that
$$
d(x, x') \leq R \Rightarrow d(f(x), f(x')) \leq \varepsilon
$$
for $x, x' \in X \setminus B$. 
\end{Definition}

We say that a bounded map $f: X \rightarrow \mathbb{C}$ \emph{tends to zero at infinity} if for every $\varepsilon > 0$, there is a bounded set $B$ in $X$ such that $|f(x)| \leq \varepsilon$ whenever $x \notin B$. Let $B_h(X)$ be the set of bounded slowly oscillating functions from $X$ to $\mathbb{C}$, and let $B_0(X)$ be the set of bounded functions from $X$ to $\mathbb{C}$ which tend to zero at infinity. It is easy to check that $B_h(X)$ is a unital ${C}^\ast$-algebra with the sup-norm and pointwise operations, and that $B_0(X)$ is a closed ideal of $B_h(X)$. 

\begin{Definition}
The \emph{Higson corona} of $X$, denoted by $\nu X$, is the spectrum of the $C^\ast$-algebra $B_h(X)/B_0(X)$.
\end{Definition}
In other words, $\nu X$ is the unique (up to homeomorphism) compact Hausdorff space whose algebra $C(\nu X)$ of continuous complex-valued functions is $\ast$-isomorphic to $B_h(X)/B_0(X)$ (for more on the theory of $C^\ast$-algebras, see for example~\cite{Arveson}). For proper metric spaces this definition coincides with the original definition in terms of the Higson compactification (see Lemma~2.40 in~\cite{RoeLectures}). The study of coarse properties of metric spaces can be viewed as the study of properties which are invariant under \emph{coarse equivalence} of metric spaces (defined in the next section). It turns out that coarsely equivalent metric spaces have homeomorphic Higson coronas~\cite{RoeLectures}, so that the Higson corona is indeed a coarse invariant of a space. A good example of the relationship between coarse properties of a space and topological properties of its Higson corona is Theorem 7.2 of~\cite{Dranishnikov00}, where it was shown that if the asymptotic dimension of a proper metric space is finite, then the asymptotic dimension coincides with the covering dimension of its Higson corona. 

In this paper, we consider a very simple topological condition on the Higson corona, namely that of connectedness. This condition can be stated very easily in terms of the algebra $B_h(X)/B_0(X)$. Note that any topological space $X$ is disconnected if and only if it admits a continuous non-constant map to the discrete space $\{0,1\} \subset \mathbb{C}$. As an immediate consequence, we obtain the following result:

\begin{Lemma} \label{algcon}
Let $X$ be a compact Hausdorff space and $C(X)$ its $C^\ast$-algebra of continuous complex-valued functions. Then $X$ is disconnected if and only if $C(X)$ contains a non-trivial idempotent element. In particular, the Higson corona of a space $X$ is connected if and only if $B_h(X)/B_0(X)$ contains no non-trivial idempotent elements.
\end{Lemma}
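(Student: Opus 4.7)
The plan is to prove the equivalence by exploiting the characterization of disconnectedness mentioned just before the lemma: a topological space is disconnected precisely when it admits a continuous non-constant map to $\{0,1\} \subset \mathbb{C}$. Idempotents in $C(X)$ are essentially such maps, so the two statements are really the same statement in different language.

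For the forward direction, I would assume $X$ is disconnected and write $X = U \sqcup V$ with both $U$ and $V$ non-empty clopen subsets. Then the characteristic function $\chi_U$ is continuous (because $U$ is clopen) and complex-valued, hence an element of $C(X)$. Since $\chi_U^2 = \chi_U$ and $\chi_U$ equals neither $0$ nor $1$ (as both $U$ and $V$ are non-empty), it is a non-trivial idempotent.

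For the reverse direction, I would take a non-trivial idempotent $e \in C(X)$. The equation $e^2 = e$ evaluated pointwise gives $e(x)(e(x) - 1) = 0$ for each $x \in X$, so $e(x) \in \{0,1\}$ for all $x$. Since $e$ is continuous, the sets $U = e^{-1}(0)$ and $V = e^{-1}(1)$ are disjoint clopen subsets with $X = U \sqcup V$; non-triviality of $e$ means neither preimage is empty, so $X$ is disconnected.

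The final statement follows immediately by applying the equivalence to the compact Hausdorff space $\nu X$, together with the defining $\ast$-isomorphism $C(\nu X) \cong B_h(X)/B_0(X)$, which transports non-trivial idempotents bijectively. There is no real obstacle here; the only minor point to be careful about is that ``non-trivial'' must exclude both $0$ and $1$ (the identity of the algebra), which corresponds precisely to both parts of the decomposition being non-empty.
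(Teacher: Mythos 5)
Your proof is correct and is exactly the argument the paper leaves implicit: it invokes the observation that disconnectedness is equivalent to admitting a non-constant continuous map to $\{0,1\}$, and your identification of such maps with non-trivial idempotents (via $e(x)(e(x)-1)=0$ and clopen preimages) is precisely how that observation yields the lemma. The transfer to $\nu X$ via the $\ast$-isomorphism $C(\nu X)\cong B_h(X)/B_0(X)$ is also as intended.
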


The main result of this paper will be to show that connectedness of the Higson corona can be characterised by a categorical condition (condition $\mathsf{(C)}$ in Section~\ref{SecHigson}) which is a natural generalization of the notion of connectedness in topological spaces, stated in the language of the coarse category. This result motivates the study of categorical conditions in the coarse category. Note that, in general, any condition stated in categorical language in the coarse category is automatically invariant under coarse equivalence (since coarse equivalences are isomorphisms in this category). We will also give one further connection between a categorical notion and a well-known coarse condition in this paper, namely, we give a categorical characterisation of the notion of $\omega$-excisive decomposition introduced in~\cite{HigsonMV}. It would be interesting in the future to investigate what other categorical conditions stated in terms of the coarse category turn out to correspond to well-known or interesting coarse properties of spaces.


\section{Preliminaries} \label{SecPrelim}
Throughout this paper, we will often deal with the disjoint union $X + Y$ of two sets $X$ and $Y$. For convenience, we will not distinguish between (relations on) the set $X$ and (relations on) the image of $X$ under the inclusion $\iota_X: X \rightarrow X + Y$. For a subset $A$ of a metric space $X$ and $R > 0$, we denote the set $\{x \in X \mid d(x, A) < R\}$ by $B(A, R)$. Throughout the paper we will make use of elementary category theoretic notions, most importantly the notion of coproduct and pushout, although we give explicit descriptions of the universal property in question whenever possible. For an introduction to category theory, we direct the reader to~\cite{MacLane}.

Let $X$ and $Y$ be metric spaces, and let $f: X \rightarrow Y$ be a map. We call the map $f$ $\rho$-\emph{bornologous}, where $\rho$ is a function $\rho: [0, \infty) \rightarrow [0, \infty)$, if for any points $x, x'$ in $X$, 
$$
d_X(x,x') \leq R \Rightarrow d_Y(f(x), f(x')) \leq \rho(R).
$$
The map $f$ is called \emph{bornologous} if it is $\rho$-bornologous for some $\rho$. In other words, $f$ is bornologous if and only if for every $R > 0$ there is an $S > 0$ such that $d_X(x,x') \leq R \Rightarrow d_Y(f(x), f(x')) \leq S$. It is easy to check that the composite of two bornologous maps is again bornologous. Consequently, metric spaces together with bornologous maps form a category, which we denote throughout by $\mathbf{Met}_\mathbf{Born}$. A map $f$ between metric spaces is called \emph{proper} if the inverse image of any bounded set under $f$ is bounded. A map is called \emph{coarse} if it is both bornologous and proper. Proper maps are closed under composition, so coarse maps form a subcategory of $\mathbf{Met}_\mathbf{Born}$.

Two maps $f$ and $g$ from a metric space $X$ to a metric space $Y$ are \emph{close} if there is a $R > 0$ such that $d_Y(f(x), g(x)) \leq R$ for all $x \in X$. A map $f$ from $X$ to $Y$ is a \emph{coarse equivalence} if there exists a bornologous map $f^\ast: Y \rightarrow X$ such that $f f^\ast$ and $f^\ast f$ are close to the respective identities. Note that the bijective coarse equivalences are precisely the isomorphisms in $\mathbf{Met}_\mathbf{Born}$, and that coarse equivalences are always proper. Coarse equivalences are also coarsely surjective: if $u: X \rightarrow Y$ is a coarse equivalence, then there is an $R > 0$ such that $Y \subseteq B(\mathsf{Im}(u), R)$.

It is often convenient to consider the category whose objects are metric spaces and whose morphisms are equivalence classes of coarse maps under the closeness relation (note that in in this category, coarse equivalences represent isomorphisms). In this paper, we will call this category the \emph{coarse category of metric spaces}, by analogy with the coarse category as defined in~\cite{RoeIndex} (where objects are abstract coarse spaces). Note that composition is well-defined in this category, since if $f$ is close to $g$ and $h$ is close to $k$, where $f$,$g$, $h$ and $k$ are bornologous maps, then $hf$ is close to $kg$ whenever these composites are defined. 

\section{Coarse coproducts}\label{SecCop}
In this section we introduce coarse coproducts of metric spaces and prove some basic results about them.

\begin{Definition}
Let $X$ and $Y$ be two metric spaces, and let $x_0 \in X$ and $y_0 \in Y$ be two arbitrary points (which we will call the \emph{base points} for the coproduct). Then the \emph{coarse coproduct} of $(X, d_X)$ and $(Y, d_Y)$ is the space $(X + Y, d_{X+Y})$ whose underlying set is the disjoint union of $X$ and $Y$ and where the distance $d_{X+Y}$ is defined as follows:
\[ d_{X+Y}(a,b) = \begin{cases} 
      d_X(a, b) & a,b \in X \\
      d_Y(a,b)  & a,b \in Y \\
      d_X(a, x_0) + 1 + d_Y(y_0, b) & a \in X, b \in Y .
   \end{cases}
\]
\end{Definition}

\begin{Proposition}\label{coprod1}
Let $X$ and $Y$ be metric spaces with coarse coproduct $X + Y$ and let $\iota_X: X \rightarrow X + Y$ and $\iota_Y: Y \rightarrow X+Y$ be the evident isometric embeddings. Then
\begin{itemize}
\item[(1)] $X + Y$ (together with $\iota_X$ and $\iota_Y$) is the coproduct in $\mathbf{Met}_\mathbf{Born}$ of $X$ and $Y$, i.e.~if there are bornologous maps $f: X \rightarrow Z$ and $g: Y \rightarrow Z$, then there exists a unique bornologous map $h: X + Y \rightarrow Z$ such that $h \iota_X = f$ and $h \iota_Y = g$;
\item[(2)] in the notation of (1) above, both $f$ and $g$ are proper if and only if $h$ is proper;
\item[(3)] in the notation of (1) above, if $h': X + Y \rightarrow Z$ is a map such that $h' \iota_X$ is close to $f$ and $h' \iota_Y$ is close to $g$, then $h'$ is close to $h$.
\end{itemize}
\end{Proposition}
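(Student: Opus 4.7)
My plan is to define the obvious set map $h\colon X + Y \to Z$ by $h|_X = f$ and $h|_Y = g$ (which is forced by the commutativity conditions $h\iota_X = f$, $h\iota_Y = g$), and then to verify (1), (2), and (3) in turn. For (1), uniqueness of $h$ is immediate from the disjoint union structure, so the only substantive step is to check that $h$ is bornologous. I would split on the location of $a, b \in X + Y$. The two same-component cases reduce directly to the bornologous-ness of $f$ and $g$, since $\iota_X$ and $\iota_Y$ are isometric. The only interesting case is $a \in X$, $b \in Y$: if $d_{X+Y}(a, b) \le R$, the defining formula forces both $d_X(a, x_0) \le R$ and $d_Y(y_0, b) \le R$, and the triangle inequality in $Z$ together with control functions $\rho_f, \rho_g$ for $f, g$ yields
\[
d_Z(h(a), h(b)) \le \rho_f(R) + d_Z(f(x_0), g(y_0)) + \rho_g(R).
\]
This supplies a control function for $h$; the only new ingredient is the additive constant $d_Z(f(x_0), g(y_0))$.

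For (2), the forward direction is formal: $\iota_X$ and $\iota_Y$ are isometric embeddings and hence proper, so $f = h\iota_X$ and $g = h\iota_Y$ inherit properness from $h$. The converse rests on the following observation about the coarse coproduct: a subset $A = A_X \sqcup A_Y$ of $X + Y$ is bounded in $X + Y$ if and only if $A_X$ is bounded in $X$ and $A_Y$ is bounded in $Y$. The nontrivial direction follows because cross-distances are controlled by distances to the base points: if $A_X \subseteq B(x_0, R_1)$ and $A_Y \subseteq B(y_0, R_2)$, then every cross-pair lies at distance at most $R_1 + R_2 + 1$. Applying this to $h^{-1}(B) = f^{-1}(B) \sqcup g^{-1}(B)$ for a bounded $B \subseteq Z$ shows $h^{-1}(B)$ is bounded, so $h$ is proper.

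Statement (3) is essentially immediate: if $R_1$ witnesses the closeness of $h'\iota_X$ to $f$ and $R_2$ witnesses the closeness of $h'\iota_Y$ to $g$, then $\max(R_1, R_2)$ witnesses the closeness of $h'$ to $h$, since every point of $X + Y$ lies in exactly one summand. The only mildly technical step in the whole proposition is the converse of (2), where one must rule out a bounded set in $Z$ pulling back to something unbounded in $X + Y$ through unbounded cross-distances; but the explicit formula for $d_{X+Y}$ handles this cleanly, so no real obstacle arises.
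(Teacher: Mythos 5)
Your proposal is correct and follows essentially the same route as the paper: the same pointwise definition of $h$, the same triangle-inequality estimate $\rho_f(R) + d_Z(f(x_0), g(y_0)) + \rho_g(R)$ in the cross-component case, the same observation that a subset of $X+Y$ is bounded iff its traces on $X$ and $Y$ are, and the same max-of-constants argument for (3). You merely spell out more details for (2) and (3) than the paper does.
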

\begin{proof}
For (1), define $h$ to coincide with $f$ on $X$ and $g$ on $Y$. It remains to show that $h$ is bornologous, since then $h$ is clearly unique with the desired property. Let $r = d_Z(f(x_0), g(y_0))$, where $x_0 \in X$ and $y_0 \in Y$ are the chosen base points, and suppose $f$ and $g$ are $\rho$- and $\sigma$-bornologous respectively. Since $h$ is clearly bornologous on $X$ and $Y$, it remains to consider points $a \in X$ and $b \in Y$ with $d(a,b) \leq R$. We have
$$
d(h(a), h(b)) = d(f(a), g(b)) \leq d(f(a), f(x_0)) + r + d(g(y_0), g(b)) 
$$
Since $d(a, x_0) \leq d(a, b) \leq R$ and $d(y_0, b) \leq d(a, b) \leq R$ in $X+Y$, we have
$$
d(h(a), h(b)) \leq \rho(R) + r + \sigma(R)
$$
which gives the required result. For (2), note that a subset of $X+Y$ is bounded if and only if its restrictions to both $X$ and $Y$ are bounded. (3) is easy to check.
\end{proof}

It follows from the proposition above that the coarse coproduct of $X$ and $Y$ is defined up to bijective coarse equivalence -- that is, if different base points $x_1 \in X$ and $y_1 \in Y$ are chosen for the construction, then the resulting coarse coproduct is coarsely equivalent to the one with base points $x_0$, $y_0$ via the identity set map.

The proposition above also shows that not only is $X+Y$ the coproduct in $\mathbf{Met}_\mathbf{Born}$, but it also gives the coproduct in the subcategory of coarse maps, as well as in the coarse category of metric spaces. As usual, the existence of binary coproducts gives the existence of finite coproducts in all these categories. It is easy to show that arbitrary coproducts (for example, an uncountable coproduct of singleton spaces) do not exist in $\mathbf{Met}_\mathbf{Born}$. Countable coproducts do exist in $\mathbf{Met}_\mathbf{Born}$, however, as the following proposition shows.

\begin{Proposition} \label{countablecop}
Let $X_1, X_2, \ldots$ be a countable family of metric spaces and $x_1 \in X_1, x_2 \in X_2, \ldots$ chosen base points in each space. Let $\sum_i X_i$ be the metric space whose underlying set is the disjoint union of $X_1, X_2, \ldots$ and whose distance $d$ is defined as follows:
\[ d(a,b) = \begin{cases} 
       0 & a = b \\
      d_{X_i}(a, b) + i & a,b \in X_i,\ a \neq b \\
      d_{X_i}(a, x_i) + i + j + d_{X_j}(b, x_j) & a \in X_i, b \in X_j, i\neq j  
   \end{cases}
\]
Then $\sum_i X_i$, together with the obvious injections $(\iota_i)_{i \geq 1}$, is the coproduct of the $X_i$ in $\mathbf{Met}_\mathbf{Born}$.
\end{Proposition}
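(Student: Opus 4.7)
The plan is to proceed in parallel with the proof of Proposition~\ref{coprod1}, now with the extra bookkeeping needed for an infinite family. First I would verify that the formula defining $d$ is in fact a metric on the disjoint union: symmetry and the equivalence $d(a,b)=0 \Leftrightarrow a=b$ are immediate, so the only content is the triangle inequality. I would handle this by case analysis on where three points $a,b,c$ lie among the $X_i$'s; the ``+$i$'' on same-component distances and the ``+$i+j$'' on cross-component distances provide ample slack, and the only cases that need real checking are those in which two of the three points lie in the same component while the third does not. Next I would note that each $\iota_i$ is bornologous (indeed each is $\rho$-bornologous for $\rho(R)=R+i$).

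The universal property then reduces, as in Proposition~\ref{coprod1}(1), to showing that the set map $h:\sum_i X_i \to Z$ obtained by glueing a family of bornologous maps $f_i:X_i \to Z$ is itself bornologous; uniqueness is forced by $h\iota_i = f_i$. Fix $R>0$ and suppose $d(a,b)\leq R$. The key observation is the finiteness principle built into the construction: if $a,b\in X_i$ with $a\neq b$ then $i \leq d(a,b) \leq R$, and if $a\in X_i$, $b\in X_j$ with $i\neq j$ then $i+j \leq d(a,b)\leq R$. In particular, only finitely many indices $i$ (and finitely many index pairs $(i,j)$) are consistent with $d(a,b)\leq R$. For each such index $i$, writing $f_i$ as $\rho_i$-bornologous, one gets $d_Z(f_i(a),f_i(b))\leq \rho_i(R)$ from $d_{X_i}(a,b)\leq R$; in the cross-component case one routes through the basepoints to obtain
$$
d_Z(h(a),h(b)) \leq \rho_i(R) + d_Z(f_i(x_i), f_j(x_j)) + \rho_j(R),
$$
using $d_{X_i}(a,x_i),d_{X_j}(b,x_j)\leq R$. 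Taking the maximum over the finitely many admissible $i$ and $(i,j)$ yields a bound depending only on $R$, proving $h$ is bornologous.

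The main obstacle, and really the whole point of the construction, is precisely this finiteness principle: the summands ``$+i$'' and ``$+i+j$'' are engineered so that a given scale $R$ only sees finitely many of the $X_i$, and this is exactly what lets one take a maximum over finitely many bornologous control functions $\rho_i$. This is also where the argument would break down for an uncountable family, since no single real-valued shift can force finiteness of an uncountable index set, consistent with the earlier remark that uncountable coproducts fail to exist in $\mathbf{Met}_\mathbf{Born}$. The triangle-inequality verification is routine but tedious, and that is the step I would expect to take the most space in a full write-up.
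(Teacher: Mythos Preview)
Your proposal is correct and follows essentially the same route as the paper: both arguments hinge on the observation that for any fixed $R>0$ only finitely many indices $i$ can occur (the paper phrases this as ``there is a $k$ with $d(a,b)\leq R \Rightarrow a=b$ whenever $\max(i,j)\geq k$''), after which one reduces to the finite coproduct case handled in Proposition~\ref{coprod1}. Your write-up is more explicit about the metric axioms and the cross-component estimate, but the underlying idea is the same.
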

\begin{proof}
Suppose $Z$ is a metric space and $f_i: X_i \rightarrow Z$ a family of bornologous maps. Let $f: \sum_i X_i \rightarrow Z$ be the induced set map. Then for any $R > 0$, there is a $k \in \mathbb{N}$ such that $d(a, b) \leq R \Rightarrow a = b$ whenever $a \in X_i$ and $b \in X_j$ with $\mathsf{max}(i,j) \geq k$. For $f$ to be bornologous it is thus enough for it to be bornologous on subspaces of the form 
$$
\bigcup_{i \leq k} X_i \subseteq \sum_i X_i
$$
which is easy to show using similar arguments to Proposition~\ref{coprod1}.
\end{proof}

Note that it is not true in general that, in the notation of the proof, the map $f: \sum_i X_i \rightarrow Z$ is proper whenever the $f_i$ are. Moreover, if $f' \iota_i$ is close to $f_i$ for every $i$ for some bornologous map $f'$, then $f'$ need not be close to $f$.

If one changes the definition of the metric $d$ defined in Proposition~\ref{countablecop} to
\[ d(a,b) = \begin{cases} 
      d_{X_i}(a, b) & a,b \in X_i \\
      d_{X_i}(a, x_i) + i + j + d_{X_j}(b, x_j) & a \in X_i, b \in X_j, i\neq j  
   \end{cases}
\]
then one obtains a space $\square_i X_i$ with the following universal property: for any family $f_i: X_i \rightarrow Z$ of $\rho$-bornologous maps, there is a unique bornologous map $f: \square_i X_i \rightarrow Z$ such that $f\iota_i = f_i$ for each $i$. Note that in this case the $f_i$ must share the same function $\rho$. 

This construction is well known in the case when $X = G$ is a finitely generated group with the word length metric and $X_i = G/G_i$ is a sequence of finite quotients of $G$ such that every finite index normal subgroup of $G$ contains some $G_i$. The space $\square_i X_i$ is then (up to bijective coarse equivalence) the \emph{box space} of $G$ introduced in~\cite{RoeLectures}. An important result about box spaces is as follows: a residually finite group $G$ (i.e.~one which admits such a sequence $G_i$) is amenable if and only if the box space satisfies Yu's Property A (see~\cite{NowakYu}).

\section{Connectedness of the Higson corona} \label{SecHigson}
Given any category with coproducts, there are a number of conditions on an object $X$ which in the category of topological spaces and continuous maps all reduce to the usual notion of topological connectedness. Some of these conditions are listed and compared in~\cite{GJanelidze}. In this paper, we will use the following condition from this list:
\begin{itemize}
\item any morphism $f$ from $X$ to a coproduct $Y + Z$ factors through a coproduct injection, i.e.~there exists either a map $g: X \rightarrow Y$ such that $\iota_Y g = f$ or a map $h: X \rightarrow Z$ such that $\iota_Z h = f$.
\end{itemize}
It is easy to check that this capture the notion of connectedness in the case of topological spaces when applied in the category of topological spaces and continuous maps. In the coarse category of metric spaces, this condition becomes the following condition on a metric space $X$:

\begin{itemize}
\item[$\mathsf{(C)}$] for every coarse map $f: X \rightarrow Y + Z$, there exists either a coarse map $g: X \rightarrow Y$ such that $\iota_Y g$ is close to $f$ or a coarse map $h: X \rightarrow Z$ such that $\iota_Z h$ is close to $f$.
\end{itemize}

\begin{Theorem}\label{disconthm}
For a metric space $X$, the following are equivalent:
\begin{itemize}
\item[(a)] $X$ doesn't satisfy $\mathsf{(C)}$;
\item[(b)] there are two unbounded subsets $A$ and $B$ of $X$ such that
\begin{itemize}
\item $X = A \cup B$, and
\item for any $R > 0$, there is a bounded set $C_R \subseteq X$ such that 
$$a \in (A \setminus C_R) \wedge  b \in (B \setminus C_R) \Rightarrow d_X(a,b) \geq R.$$ 
\end{itemize}
\item[(c)] $X$ is (bijectively) coarsely equivalent to a coarse coproduct $Y + Z$ where neither $Y$ nor $Z$ is bounded;
\item[(d)] there exists a coarse map $f: X \rightarrow \mathbb{Z}$ such that the image of $f$ has no maximum or minimum.
\end{itemize}
\end{Theorem}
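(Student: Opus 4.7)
The plan is to prove the cycle (a) $\Rightarrow$ (b) $\Rightarrow$ (c) $\Rightarrow$ (d) $\Rightarrow$ (a), which is the most economical route since (a), being the negation of $\mathsf{(C)}$, is the most abstract statement and naturally produces the data for (b), while (d) feeds cleanly back into (a) via an explicit coarse coproduct.

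For (a) $\Rightarrow$ (b), take a coarse $f: X \to Y+Z$ witnessing the failure of $\mathsf{(C)}$ and set $A = f^{-1}(Y)$, $B = f^{-1}(Z)$. If $A$ were bounded, the map $h: X \to Z$ defined by $h = f$ on $B$ and $h \equiv z_0$ on $A$ would be coarse (properness uses properness of $f$ plus boundedness of $A$; bornologousness uses that $f(A)$ is bounded), and $\iota_Z h$ would be close to $f$ since $f(A)$ sits in a bounded neighborhood of $y_0$. This would contradict the failure of $\mathsf{(C)}$, so $A$ and, symmetrically, $B$ must be unbounded. For the separation property, suppose it failed for some $R > 0$: then one can pick sequences $a_n \in A$, $b_n \in B$ escaping every bounded set of $X$ with $d_X(a_n, b_n) < R$. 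Bornologousness of $f$ bounds $d_{Y+Z}(f(a_n), f(b_n))$, and the cross-distance formula in $Y+Z$ then forces $d_Y(f(a_n), y_0)$ and $d_Z(f(b_n), z_0)$ to be uniformly bounded. Properness of $f$ then confines the $a_n$ and $b_n$ to a bounded set, the desired contradiction.

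For (b) $\Rightarrow$ (c), form the coarse coproduct $A+B$ with base points $x_0 \in A$, $y_0 \in B$, and show that the identity set map $X \to A+B$ is a bijective coarse equivalence by checking both directions are bornologous. The direction $A+B \to X$ is immediate from the defining formula for $d_{A+B}$ and the triangle inequality in $X$. For $X \to A+B$, given $a \in A$, $b \in B$ with $d_X(a,b) \leq R$, the contrapositive of the separation property at level $R+1$ forces $a \in C_{R+1}$ or $b \in C_{R+1}$; combining this with the triangle inequality yields a uniform bound on $d_A(a, x_0) + d_B(b, y_0)$, hence on $d_{A+B}(a,b)$. For (c) $\Rightarrow$ (d), define $f: Y+Z \to \mathbb{Z}$ by $y \mapsto \lceil d_Y(y, y_0) \rceil$ and $z \mapsto -\lceil d_Z(z, z_0) \rceil - 1$; the explicit formula for $d_{Y+Z}$ makes $f$ coarse, and unboundedness of $Y$ and $Z$ ensures the image has neither maximum nor minimum. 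Precomposing with the coarse equivalence $X \to Y+Z$ finishes this step.

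For (d) $\Rightarrow$ (a), observe that with base points $0$ and $1$ the coarse coproduct $\mathbb{Z}_{\leq 0} + \mathbb{Z}_{\geq 1}$ is isometric to $\mathbb{Z}$ via the identity, so the given $f: X \to \mathbb{Z}$ is already a coarse map into a coproduct. If $f$ were close to $\iota_{\mathbb{Z}_{\leq 0}} \circ g$ for some coarse $g$, then for every $x$ with $f(x) \geq 1$ the distance in the coproduct would be at least $f(x)$, contradicting "no maximum"; the symmetric argument rules out factorization through the other injection. The main obstacle is the step (a) $\Rightarrow$ (b): extracting the geometric separation of $A$ and $B$ purely from the categorical failure of $\mathsf{(C)}$ requires using both properties of a coarse map (bornologousness to pass sequences through $f$, and properness to pull bounded neighborhoods back) in tandem with the specific arithmetic of the coarse-coproduct metric.
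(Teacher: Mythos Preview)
Your cycle (a)$\Rightarrow$(b)$\Rightarrow$(c)$\Rightarrow$(d)$\Rightarrow$(a) is the same route the paper takes, and each step is argued along the same lines (the paper constructs $C_R$ directly as $f^{-1}\big(B(y_0,\rho(R))\cup B(z_0,\rho(R))\big)$ rather than via your sequence contradiction, and uses $\mathbb{N}+\mathbb{N}$ rather than $\mathbb{Z}_{\le 0}+\mathbb{Z}_{\ge 1}$ for (d)$\Rightarrow$(a), but these are cosmetic). One small slip in your (b)$\Rightarrow$(c): condition (b) does not require $A$ and $B$ to be disjoint, so there is no ``identity set map'' $X\to A+B$ in general; the paper fixes this by observing that $A\cap B\subseteq C_R$ for every $R$ (take $a=b=c\in A\cap B$ in the separation condition), hence $A\cap B$ is bounded, and then works with $A$ and $B\setminus A$ instead.
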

\begin{proof}
(a) $\Rightarrow$ (b): Suppose $f: X \rightarrow Y + Z$ is a coarse map such that $f$ does not factor, up to closeness, through either $\iota_Y$ or $\iota_Z$, and suppose that $f$ is $\rho$-bornologous. Then in particular, neither $\mathsf{Im}(f) \cap Y$ nor $\mathsf{Im}(f) \cap Z$ are bounded.  Let $A = f^{-1}(Y)$, $B = f^{-1}(Z)$; since $f$ is bornologous, neither $A$ nor $B$ are bounded subspaces of $X$. For any $R > 0$,  let $K = B(y_0, \rho(R)) \cup B(z_0, \rho(R))$ where $y_0 \in Y$ and $z_0 \in Z$ are the base points of the coproduct. Since $f$ is proper, $C_R = f^{-1}(K)$ is bounded. If $a \in (A \setminus C_R)$ and $b \in (B \setminus C_R)$, then using the definition of $Y+Z$, we have that 
$$d(f(a), f(b)) \geq 2\rho(R) + 1 > \rho(R),$$
so that $d(a,b) \geq R$ as required.

(b) $\Rightarrow$ (c): It is easy to see that $A \cap B$ has to be bounded, so that $B \setminus A$ must be non-empty. Choose points $a_0 \in A$ and $b_0 \in B \setminus A$. Note that we may choose the $C_R$ such that for all $R > 0$, $\{a_0, b_0\} \subseteq C_R$. Let $W = A + (B \setminus A)$, choosing $a_0$ and $b_0$ as base points and where the metric on $A$ and $B \setminus A$ is induced by $X$. The identity set map $W \rightarrow X$ is clearly bornologous, so it remains to prove that it has bornologous inverse. The inverse is clearly bornologous on $A$ and $B \setminus A$, so let $a \in A$ and $b \in B \setminus A$, and choose a bounded subset $C_R$ corresponding to the value $R = d_X(a, b)$. Let $D$ be the diameter of $C_R$. Then one of $a$ and $b$ must be in $C_R$, so
$$
d_W(a, b) = d_X(a, a_0) + d_X(b, b_0) + 1 \leq d_X(a, b) + 1 + 2D
$$
since $\{a_0, b_0\} \subseteq C_R$. This shows that the inverse of $i$ is bornologous, since $D$ depends only on $d_X(a,b)$.

(c) $\Rightarrow$ (d): Suppose $X$ is coarsely equivalent to $Y + Z$ with chosen base points $y_0$, $z_0$. Define a map $g: Y \rightarrow \mathbb{Z}$ by $g(y) = k$, where $k \leq d_Y(y, y_0) < k+1$. Define a map $h: Z \rightarrow \mathbb{Z}$ by $h(z) = -k$, where $k \leq d_Z(z, z_0) < k+1$. Clearly both $g$ and $h$ are coarse, so the induced map $f: Y + Z \rightarrow \mathbb{Z}$ is also coarse. Moreover, the image of $f$ has no maximum or minimum since $Y$ and $Z$ are unbounded. Finally, composing with the coarse equivalence from $X$ to $Y + Z$ gives the required map.

(d) $\Rightarrow$ (a): This follows from the fact that $\mathbb{Z}$ is (bijectively coarse equivalent to) the coarse coproduct of $\mathbb{N}$ with itself.
\end{proof}

\begin{Example}
The metric spaces $\mathbb{Z}$ and $\mathbb{R}$ do not satisfy $\mathsf{(C)}$ (indeed, they are both coarsely equivalent to the coarse coproduct $\mathbb{N} + \mathbb{N}$). The metric space $\{n^2 \mid n\in \mathbb{N}\}$ also does not satisfy $\mathsf{(C)}$ as can be seen from (b) in the above theorem (take $A$ to be the even numbers and $B$ the odd ones). It is easy to show using condition (b) in Theorem~\ref{disconthm}, however, that the metric space $\mathbb{N}$ does satisfy $\mathsf{(C)}$. In particular, there are no surjective coarse maps from $\mathbb{N}$ to $\mathbb{Z}$.
\end{Example}

In topological spaces, a space $X$ is disconnected if and only if it admits a non-trivial map to the two element  discrete space. Thus the space $\mathbb{Z}$ in some sense plays the role of the two element discrete space for condition $\mathsf{(C)}$.

\begin{Corollary}
If $f : X \rightarrow Y$ is a surjective coarse map and $X$ satisfies $\mathsf{(C)}$, then $Y$ satisfies $\mathsf{(C)}$.
\end{Corollary}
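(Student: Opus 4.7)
The strategy is proof by contradiction. Suppose $Y$ fails $\mathsf{(C)}$, so there exists a coarse map $g : Y \rightarrow P + Q$ (using $P, Q$ to avoid clashing with the notation $Y$) which is not close to $\iota_P g'$ or $\iota_Q g''$ for any coarse $g', g''$. Then $g \circ f : X \rightarrow P + Q$ is coarse, and applying $\mathsf{(C)}$ to $X$ we may assume without loss of generality that there is a coarse map $h : X \rightarrow P$ and an $R > 0$ with $d(g(f(x)), \iota_P h(x)) \leq R$ for all $x \in X$. The goal is to upgrade this to a factorization of $g$ itself through $\iota_P$, which will contradict our assumption and finish the proof.

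The key observation is the following. In $P + Q$, the distance between a point of $P$ and a point of $Q$ is at least $1$, and more precisely $d_{P+Q}(p, q) = d_P(p, p_0) + 1 + d_Q(q, q_0)$. Thus if $g(f(x)) \in Q$, the bound $d(g(f(x)), \iota_P h(x)) \leq R$ forces $g(f(x)) \in B(q_0, R)$. Since $f$ is surjective, it follows that $g^{-1}(Q) \subseteq g^{-1}(B(q_0, R))$, and this is bounded in $Y$ by properness of $g$.

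Now define $g' : Y \rightarrow P$ by setting $g'(y) = g(y)$ when $g(y) \in P$ and $g'(y) = p_0$ when $g(y) \in Q$. Since $g^{-1}(Q)$ is bounded, $g'$ differs from $g$ (viewed via $\iota_P$) by a bounded amount on all of $Y$, so $\iota_P g'$ is close to $g$. It remains to verify that $g'$ is coarse: bornologousness on each of the two cases is inherited from $g$, and the ``mixed'' case $g(y_1) \in P$, $g(y_2) \in Q$ with $d_Y(y_1, y_2) \leq S$ is handled by noting $g(y_2)$ lies in $B(q_0, R)$ while $g(y_1)$ lies within $\sigma(S)$ of it, giving a bound on $d_P(g(y_1), p_0)$; properness of $g'$ follows since bounded sets in $P$ pulled back split into a bounded piece in $P + Q$ plus the bounded set $g^{-1}(Q)$.

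The only step requiring real care is the mixed-case bornologous estimate together with keeping track of how the coproduct metric forces points of $Q$ in the image of $g \circ f$ to stay near $q_0$; both rely crucially on surjectivity of $f$ (without which $g$ could take unbounded values in $Q$ on $Y \setminus f(X)$). Everything else is a routine unwinding of definitions, and the symmetric case (where (C) applied to $X$ produces a map into $Q$ instead of $P$) is handled identically.
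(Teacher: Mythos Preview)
Your proof is correct, but it takes a different route from the paper. The paper simply invokes condition~(d) of Theorem~\ref{disconthm}: if $Y$ fails $\mathsf{(C)}$, there is a coarse map $k:Y\to\mathbb{Z}$ whose image has no maximum or minimum; then $k\circ f:X\to\mathbb{Z}$ is coarse and, by surjectivity of $f$, has the same image, so $X$ also fails $\mathsf{(C)}$. This is a one-line argument once the equivalence with~(d) is in hand.

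Your argument instead works directly from the definition of $\mathsf{(C)}$, using the explicit form of the coproduct metric to show that the bad part $g^{-1}(Q)$ is bounded and then repairing $g$ to a genuine factorisation through $\iota_P$. The advantage of your approach is that it is self-contained and does not rely on the $\mathbb{Z}$-characterisation; in particular it would adapt to the abstract coarse setting of Section~\ref{SecAbstract}, where condition~(d) is unavailable (cf.\ the remark after Theorem~\ref{absdisconthm}). The paper's approach, on the other hand, is much shorter precisely because it cashes in the work already done in Theorem~\ref{disconthm}.
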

\begin{proof}
This follows from condition (d) in Theorem~\ref{disconthm}.
\end{proof}

Recall from~\cite{RoeLectures} that the map which takes a metric space $X$ to its Higson corona $\nu X$ extends to a functor $\nu$ from the coarse category of metric spaces to the category of compact Hausdorff spaces and continuous maps (in fact, the result in~\cite{RoeLectures} is stated only for the case of proper metric/coarse spaces, but the proof works for arbitrary metric/coarse spaces). In particular, coarsely equivalent metric spaces have homeomorphic Higson coronas. It turns out that the functor $\nu$ preserves coproducts, as we will now show.

\begin{Lemma}\label{slowcop}
Let $X$ and $Y$ be metric spaces with coarse coproduct $X + Y$. Let $f: X \rightarrow \mathbb{C}$ and $g: Y \rightarrow \mathbb{C}$ be maps to the complex numbers. Then the map $h: X + Y \rightarrow \mathbb{C}$ which agrees with $f$ on $X$ and with $g$ on $Y$ is slowly oscillating/tends to zero at infinity if and only if both $f$ and $g$ are slowly oscillating/tend to zero at infinity.
\end{Lemma}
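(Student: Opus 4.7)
The plan is to treat the two properties (slowly oscillating and tending to zero at infinity) in parallel, with each equivalence split into an easy forward direction and a slightly more delicate reverse direction. Throughout, I will exploit the fact that $\iota_X$ and $\iota_Y$ are isometric embeddings, and that a subset of $X+Y$ is bounded if and only if its intersections with $X$ and with $Y$ are bounded (as already used in the proof of Proposition~\ref{coprod1}(2)).

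For the forward direction in either case, a bounded set $B \subseteq X+Y$ witnessing the property for $h$ restricts to bounded sets $B \cap X$ and $B \cap Y$ witnessing the corresponding property for $f$ and $g$. This is immediate from the isometric embedding of $X$ and $Y$.

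For the reverse direction in the ``tends to zero at infinity'' case, given $\varepsilon > 0$, choose bounded sets $B_X \subseteq X$ and $B_Y \subseteq Y$ witnessing this for $f$ and $g$; then $B = B_X \cup B_Y$ is bounded in $X+Y$ and $|h| \leq \varepsilon$ off $B$. The reverse direction for slow oscillation is the main step. Given $\varepsilon, R > 0$, pick bounded $B_X \subseteq X$ and $B_Y \subseteq Y$ witnessing slow oscillation of $f$ and $g$ at scale $(R, \varepsilon)$, and set
\[
B \;=\; B_X \cup B_Y \cup \{x \in X : d_X(x, x_0) \leq R\} \cup \{y \in Y : d_Y(y, y_0) \leq R\},
\]
which is bounded in $X+Y$ because each piece is bounded in $X$ or in $Y$. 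Now for $a, b \in (X+Y) \setminus B$ with $d_{X+Y}(a,b) \leq R$, the cross case $a \in X$, $b \in Y$ is ruled out: by the definition of the coarse coproduct and the enlargement,
\[
d_{X+Y}(a,b) \;=\; d_X(a, x_0) + 1 + d_Y(y_0, b) \;>\; R + 1 + R \;>\; R,
\]
a contradiction. Hence $a, b$ lie on the same side, and $|h(a) - h(b)|$ equals either $|f(a) - f(b)|$ or $|g(a) - g(b)|$, which is at most $\varepsilon$ by choice of $B_X$ or $B_Y$.

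The main obstacle, if any, is simply noticing the correct enlargement of $B$: the $+1$ and the sum-of-distances-to-basepoints in the coproduct metric is exactly what forces distant cross-pairs to be automatically far apart, so one has to absorb the $R$-neighbourhoods of the basepoints into the ``bad'' set to rule out the mixed case cleanly. Once that is done, both properties reduce to their restrictions on $X$ and $Y$ without further computation.
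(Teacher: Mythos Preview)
Your proof is correct and follows essentially the same approach as the paper's: the forward directions are immediate, and for the reverse direction of slow oscillation you enlarge the witnessing bounded set by the $R$-balls around the base points $x_0,y_0$, which forces any $R$-close pair outside it to lie on the same side. The paper's argument is identical in structure, with $B_1\cup B_2\cup B(x_0,R)\cup B(y_0,R)$ playing the role of your set $B$.
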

\begin{proof}
The equivalence for tending to zero at infinity is clear, as is the fact that if $h$ is slowly oscillating, then $f$ and $g$ both are. Suppose then that $f$ and $g$ are slowly oscillating. Let $\varepsilon > 0$ and $R > 0$. Since $f$ and $g$ are slowly oscillating, there are bounded sets $B_1$ in $X$ and $B_2$ in $Y$ such that, outside of $B_1 \cup B_2 \subseteq X+Y$, $d(a, b) \leq R \Rightarrow d(h(a), h(b)) \leq \varepsilon$ whenever $a$ and $b$ are either both in $X$ or both in $Y$. Let $x_0 \in X$ and $y_0 \in Y$ be the base points chosen for $X + Y$, and let $C = B(x_0, R) \cup B(y_0, R)$. Then for $x, x' \in (X + Y) \setminus (B_1 \cup B_2 \cup C)$, we have that $x$ and $x'$ are either both in $X$ or both in $Y$. Consequently, $d(h(x), h(x')) \leq \varepsilon$ as required.

\end{proof}

\begin{Proposition} \label{Higsoncop}
The Higson corona preserves binary coarse coproducts. That is, if $X$ and $Y$ are metric spaces, then $\nu X + \nu Y$ and $\nu(X+Y)$ are homeomorphic.
\end{Proposition}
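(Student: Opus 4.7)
The plan is to reduce via Gelfand duality to a purely algebraic statement and then invoke Lemma~\ref{slowcop}. Since $\nu X$ is by definition the spectrum of $B_h(X)/B_0(X)$, and since the spectrum functor converts finite products of unital commutative C*-algebras into disjoint unions (which in turn are precisely the coproducts in compact Hausdorff spaces: any character on $A \times B$ must vanish on one summand, because $(1,0)$ is a non-trivial idempotent satisfying $(1,0)(0,1) = 0$), it suffices to produce a $*$-isomorphism
$$
B_h(X+Y)/B_0(X+Y) \;\cong\; \bigl(B_h(X)/B_0(X)\bigr) \;\times\; \bigl(B_h(Y)/B_0(Y)\bigr).
$$

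To build this, I define the restriction map $\Phi: B_h(X+Y) \to B_h(X) \times B_h(Y)$ by $\Phi(h) = (h \circ \iota_X, h \circ \iota_Y)$. Since $\iota_X$ and $\iota_Y$ are isometric embeddings, both restrictions are automatically bounded and slowly oscillating, so $\Phi$ is well-defined; it is plainly a sup-norm-preserving $*$-homomorphism, and is injective because $X+Y$ is covered by the two images $\iota_X(X)$ and $\iota_Y(Y)$. Surjectivity of $\Phi$ is precisely the nontrivial direction of Lemma~\ref{slowcop}: every pair $(f,g) \in B_h(X) \times B_h(Y)$ assembles into a bounded slowly oscillating function on $X+Y$. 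Hence $\Phi$ is a $*$-isomorphism.

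Running the same argument with the ``tends to zero at infinity'' clause of Lemma~\ref{slowcop} shows that $\Phi$ restricts to a $*$-isomorphism $B_0(X+Y) \cong B_0(X) \times B_0(Y)$ of closed ideals. Passing to quotients yields the required algebraic isomorphism, and applying the spectrum functor then produces the stated homeomorphism $\nu(X+Y) \cong \nu X + \nu Y$. All the real content is packaged into Lemma~\ref{slowcop}; the rest is standard C*-algebraic bookkeeping, so I do not anticipate a genuine obstacle.
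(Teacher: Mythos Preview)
Your argument is correct and follows essentially the same route as the paper: both proofs reduce via Gelfand duality to showing that the restriction map induces a $\ast$-isomorphism $B_h(X+Y)/B_0(X+Y) \cong (B_h(X)/B_0(X)) \times (B_h(Y)/B_0(Y))$, and both invoke Lemma~\ref{slowcop} for the substantive steps. The only cosmetic difference is that you first establish the isomorphism $\Phi$ at the level of $B_h$ (and its restriction to $B_0$) and then pass to quotients, whereas the paper defines the map directly on the quotient algebras and checks injectivity and surjectivity there; the content is the same.
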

\begin{proof}
Consider the algebras $C(\nu(X) + \nu(Y)) \cong C(\nu X) \times C(\nu Y)$ and $C(\nu(X+Y))$. There is a canonical $\ast$-homomorphism $F: C(\nu (X + Y)) \rightarrow C(\nu X)\times C(\nu Y)$ which sends an equivalence class of maps $[f]$ to the pair $([f\iota_X], [f\iota_Y])$. It follows from Lemma~\ref{slowcop} that this map has trivial kernel and is surjective, so $F$ is an isomorphism. Thus we obtain that $\nu X + \nu Y$ and $\nu(X+Y)$ are homeomorphic.
\end{proof}

We are now ready to state the main result of this paper. 

\begin{Theorem}\label{discon1}
The following are equivalent for a metric space $X$:
\begin{itemize}
\item[(a)] $X$ satisfies $\mathsf{(C)}$;
\item[(b)] the Higson corona of $X$ is (topologically) connected;
\item[(c)] $B_h(X)/B_0(X)$ does not contain a non-trivial idempotent element.
\end{itemize}
\end{Theorem}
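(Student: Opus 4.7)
The plan is to prove the two biconditionals (a)$\Leftrightarrow$(b) and (b)$\Leftrightarrow$(c) separately. The equivalence (b)$\Leftrightarrow$(c) is immediate from Lemma~\ref{algcon} applied to $\nu X$, so all the work goes into (a)$\Leftrightarrow$(b), which I will handle by proving the two contrapositives.

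For the direction (a)$\Rightarrow$(b), the cleanest route is the chain of reductions already set up in the paper. Assume $X$ fails $\mathsf{(C)}$. By Theorem~\ref{disconthm}, $X$ is bijectively coarsely equivalent to a coarse coproduct $Y + Z$ with both $Y$ and $Z$ unbounded. Since the Higson corona is a coarse invariant and preserves binary coproducts (Proposition~\ref{Higsoncop}), we obtain $\nu X \cong \nu Y + \nu Z$ in the category of compact Hausdorff spaces, where the right-hand coproduct is disjoint union. It remains to verify that both $\nu Y$ and $\nu Z$ are nonempty; this is the only non-mechanical step, and it reduces to observing that whenever a metric space $W$ is unbounded, the constant function $1$ is not in $B_0(W)$, so $B_h(W)/B_0(W)$ is a nonzero unital $C^\ast$-algebra and hence has nonempty spectrum. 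Thus $\nu X$ is the disjoint union of two nonempty compact Hausdorff spaces, so it is disconnected.

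For (b)$\Rightarrow$(a), I assume $\nu X$ is disconnected and produce a decomposition of $X$ witnessing condition (b) of Theorem~\ref{disconthm}. By Lemma~\ref{algcon}, there is a nontrivial idempotent in $B_h(X)/B_0(X)$, which lifts to some bounded slowly oscillating $f : X \to \mathbb{C}$ with $f^2 - f \in B_0(X)$ and with both $f$ and $1-f$ outside $B_0(X)$. The technical heart of this direction, and the one step where I expect to have to work, is the truncation argument: since $|f(x)(f(x)-1)| \to 0$ at infinity, outside a bounded set $f$ takes values in an arbitrarily small neighborhood of $\{0,1\}$, so I can define $\chi : X \to \{0,1\}$ by sending $x$ to $0$ when $|f(x)| \leq 1/3$, to $1$ when $|f(x)-1| \leq 1/3$, and arbitrarily on the remaining bounded set. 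A short check using slow oscillation of $f$ shows that $\chi$ is itself slowly oscillating (for small enough $\varepsilon$, nearby points outside a bounded set land on the same component of $\{0,1\}$) and that $\chi - f \in B_0(X)$, so $\chi$ still represents a nontrivial idempotent.

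Given such a $\{0,1\}$-valued slowly oscillating function, set $A = \chi^{-1}(0)$ and $B = \chi^{-1}(1)$. Then $X = A \cup B$, both $A$ and $B$ are unbounded (else $\chi$ or $1-\chi$ would lie in $B_0$), and for each $R > 0$ slow oscillation with $\varepsilon = 1/2$ produces a bounded set $C_R$ outside of which any two $R$-close points have the same $\chi$-value, so $a \in A \setminus C_R$, $b \in B \setminus C_R$ forces $d(a,b) > R$. This is precisely condition (b) of Theorem~\ref{disconthm}, and therefore $X$ fails $\mathsf{(C)}$, as required. Assembling the two contrapositives with Lemma~\ref{algcon} completes the proof.
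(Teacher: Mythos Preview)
Your proof is correct and follows essentially the same route as the paper: the paper organises the implications as (b)$\Rightarrow$(a) via Proposition~\ref{Higsoncop}, (b)$\Leftrightarrow$(c) via Lemma~\ref{algcon}, and (a)$\Rightarrow$(c) by the same idempotent/slow-oscillation argument you give, with the only cosmetic difference that the paper works directly with $A = f^{-1}(B(0,1/4))$ rather than first passing to a $\{0,1\}$-valued truncation $\chi$.
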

\begin{proof}
(b) $\Rightarrow$ (a): This follows from Proposition~\ref{Higsoncop} and the fact that for an unbounded metric space, the Higson corona is non-empty.

(b) $\Leftrightarrow$ (c): This follows from Lemma~\ref{algcon}.

(a) $\Rightarrow$ (c): Suppose that (c) doesn't hold. Then there is a slowly oscillating map $f: X \rightarrow \mathbb{C}$ such that $[f^2 - f] = [0]$, with $[f] \neq [0]$, $[f] \neq [1]$. This means that for any $\varepsilon > 0$, there is a bounded set $C$ such that the image of $X \setminus C$ under $f$ is contained in $B(0, \varepsilon) \cup B(1, \varepsilon)$. In particular, one can choose $C$ such that the image of $X \setminus C$ under $f$ is contained in $B(0,1/4) \cup B(1, 1/4)$. Let $A = f^{-1}(B(0, 1/4))$ and $B = X \setminus A$. The non-triviality of $[f]$ ensures that neither $A$ nor $B$ are bounded. It follows that for any $R > 0$, we can choose a bounded set $C'$ such that 
$$d(x, x') \leq R \Rightarrow d(f(x), f(x')) \leq 1/4$$
for any $x, x' \notin C'$, and such that
$$f(X \setminus C') \subseteq B(0, 1/4) \cup B(1, 1/4).$$
In particular, if $d(x,x') \leq R$ and $x, x' \notin C'$, then $x$ and $x'$ must either both be in $A$ or both be in $B$. Thus $A$ and $B$ satisfy the conditions in (b) in Theorem~\ref{disconthm}, so we obtain the required contradiction.
\end{proof}

\section{$\omega$-Excisive decompositions and coarse pushouts}\label{SecExc}
In this section we make some connections between the work done so far and the notion of $\omega$-excisive decomposition found in~\cite{HigsonMV}. We do so via a result (Theorem~\ref{pushout} below) which in its own right further motivates the study of categorical conditions in the coarse category. Recall the following definition from~\cite{HigsonMV}.

\begin{Definition}
Let $X$ be a metric space and let $A$ and $B$ be closed subspaces with $X = A \cup B$. Then $X = A \cup B$ is an \emph{$\omega$-excisive decomposition} if for each $R > 0$ there exists a $S > 0$ such that $B(A, R) \cap B(B, R) \subseteq B(A \cap B, S)$.
\end{Definition}
Such decompositions are important because they give rise to Mayer-Vietoris sequences at the level of coarse cohomology as well as at the level of $K$-theory of uniform Roe algebras~\cite{HigsonMV}. In particular, this allows $\omega$-excisive decompositions to be used to prove the coarse Baum-Connes conjecture for certain spaces~\cite{YuBC}. We now show that such decompositions amount to pushouts in the coarse category.

\begin{Theorem}\label{pushout}
Let $X$ be a metric space and let $A$ and $B$ be closed subspaces with $X = A \cup B$. Then $X = A \cup B$ is an $\omega$-excisive decomposition if and only if $A \cap B$ is non-empty and the diagram of inclusions
\begin{equation}\label{pushout}
\begin{gathered}
\xymatrix{
& X & \\
A \ar[ru] & & B \ar[lu] \\
& A\cap B \ar[ru] \ar[lu]\\
}
\end{gathered}
\end{equation}
is a pushout in the coarse category of metric spaces, i.e.~for any coarse maps $f: A \rightarrow Y$ and $g: B \rightarrow Y$ which are close on $A \cap B$, there is a unique-up-to-closeness map $h: X \rightarrow Y$ such that $h$ is close to $f$ on $A$ and close to $g$ on $B$.
\end{Theorem}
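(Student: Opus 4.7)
The plan is to prove both directions by direct construction, guided by the principle that $\omega$-excision is exactly the condition that the ``path metric through $A \cap B$'' is coarsely equivalent to the ambient metric on $X$.

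For the forward direction, I would first observe that $\omega$-excision forces $A \cap B \neq \emptyset$: otherwise, for any $a_0 \in A$ and $b_0 \in B$, the point $a_0$ would lie in $B(A,R) \cap B(B,R) \subseteq B(A\cap B, S) = \emptyset$ for $R = d(a_0, b_0) + 1$. To establish the pushout property, given coarse maps $f \colon A \to Y$ and $g \colon B \to Y$ that are close on $A \cap B$, I would define $h \colon X \to Y$ by $h|_A = f$ and $h|_{B \setminus A} = g|_{B \setminus A}$. The core verification is that $h$ is bornologous: for $x \in A$ and $y \in B \setminus A$ with $d(x,y) \leq R$, we have $y \in B(A,R) \cap B(B,R)$, and $\omega$-excision supplies $c \in A \cap B$ with $d(y,c) \leq S$. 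Inserting $f(c)$ and $g(c)$ into the triangle inequality yields
$$
d\bigl(h(x), h(y)\bigr) \leq \rho_f(R+S) + K + \rho_g(S),
$$
where $K$ is the closeness constant on $A \cap B$ and $\rho_f, \rho_g$ are bornologousness functions for $f$ and $g$. Properness of $h$ follows since preimages of bounded sets are bounded in $A$ and $B$ separately and hence in $X$, and uniqueness up to closeness is immediate from $X = A \cup B$.

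For the converse, I would assume the pushout property with $A \cap B \neq \emptyset$ and engineer a target space whose geometry forces $\omega$-excision. The natural choice is $(X, d')$, where
$$
d'(x,y) = \begin{cases} d(x,y), & x,y \in A \text{ or } x,y \in B,\\ \inf\limits_{c \in A \cap B}\bigl(d(x,c) + d(c,y)\bigr), & \text{otherwise.}\end{cases}
$$
A short case analysis shows that $d'$ is a metric with $d' \geq d$. Then the inclusions $A \hookrightarrow (X,d')$ and $B \hookrightarrow (X,d')$ are isometric embeddings that agree literally on $A \cap B$, so the pushout hypothesis produces a coarse map $h \colon (X,d) \to (X,d')$ with $d'(h(x), x) \leq K$ for all $x$. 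Combining bornologousness of $h$ with two applications of the triangle inequality gives $d'(x,y) \leq 2K + \rho(d(x,y))$ for some $\rho$. Applied to $a \in A$ and $b \in B$ chosen within $R$ of any $u \in B(A,R) \cap B(B,R)$, this extracts a point $c \in A \cap B$ within a controlled distance of $u$, which is $\omega$-excision.

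The step I expect to be the main obstacle is identifying the right target space for the converse. The path-metric construction $(X, d')$ is essentially the pushout of the span $A \leftarrow A \cap B \to B$ computed internally, and the theorem then reduces to comparing this intrinsic distance with the ambient one. Once that perspective is in place, the remaining verifications, namely bornologousness and properness of $h$ in the forward direction and the metric axioms for $d'$ in the converse, are routine but should be carried out carefully.
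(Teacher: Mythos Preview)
Your proposal is correct and follows essentially the same approach as the paper: in the forward direction you glue $f$ and $g$ and use $\omega$-excision to find an intermediate point in $A\cap B$ controlling the cross-term, and in the converse you introduce exactly the same auxiliary metric $d'$ (distances routed through $A\cap B$) and apply the pushout property to the isometric inclusions into $(X,d')$. The only cosmetic differences are that the paper uses two intermediate points $c_1,c_2\in A\cap B$ rather than one, and that it replaces $h$ by the identity (since a map close to a bornologous map is bornologous) before reading off the $\omega$-excision bound; your version with the extra constant $2K$ works just as well, and you additionally spell out the $A\cap B\neq\emptyset$ argument that the paper leaves implicit.
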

\begin{proof}
$(\Rightarrow)$: Suppose $X = A \cup B$ is a $\omega$-excisive decomposition, and that $f: A \rightarrow C$ and $g: B \rightarrow C$ are two coarse maps such that $f$ and $g$ are close on $A \cap B$, with $f$ $\rho$-bornologous and $g$ $\sigma$-bornologous. Define $h: X \rightarrow C$ to be $f$ on $A$ and $g$ on $X \setminus A$. It remains to show that $h$ is coarse. It is clearly coarse on $A$ and $B$, so it remains to consider $a \in A$, $b \in X \setminus A$. Suppose $d(a, b) \leq R$. Then $a, b \in B(A, R) \cap B(B, R)$, so by hypothesis, there is an $S$ such that $a, b \in B(A \cap B, S)$. Suppose $d(a, c_1) \leq S + 1$ and $d(b, c_2) \leq S + 1$ for $c_1, c_2 \in A \cap B$. The distance $d(h(a), h(b)) = d(f(a), g(b))$ is bounded above by
$$
d(f(a), f(c_1)) + d(f(c_1), f(c_2)) + d(f(c_2), g(c_2)) + d(g(c_2), g(b)).
$$
The first and last terms are bounded by $\rho(S + 1)$ and $\sigma(S + 1)$ respectively. The third term is bounded by a constant since $f$ and $g$ are close on $A \cap B$. Finally, the second term is bounded by $\rho(2S + 2 + R)$ since
$$
d(c_1, c_2) \leq d(a, c_1) + d(a, b) + d(b, c_2).
$$
This shows that $h$ is bornologous (since $S$ depends only on $R$), and properness of $h$ is easy to check.

$(\Leftarrow)$: Define a new metric $d'$ on $X$ as follows:
$$
d'(a,b) = \begin{cases} 
      d(a, b) & a,b \in A \setminus B \\
      d(a,b)  & a,b \in B \setminus A\\
      \mathsf{inf}\{ d(a, c) + d(c, b)\mid c \in A \cap B\} & a\in A, b\in B \\
   \end{cases}
$$
One checks that this is a metric. Consider the inclusions $i: A \rightarrow (X, d')$, $j: B \rightarrow (X, d')$. They are actually isometric embeddings, and hence coarse. The maps $i$ and $j$ agree on $A \cap B$, so by the universal property of the pushout, there must be a coarse map $h: X \rightarrow (X, d')$ which is close to the identity. Since maps which are close to bornologous maps are bornologous, we may assume that $h$ \emph{is} the identity. Suppose that $h$ is $\rho$-bornologous. Let $R > 0$, and let $x \in B(A, R) \cap B(B, R)$. Without loss of generality, suppose that $x \in A$, and that $d(x, b) \leq 2R$ for $b \in B$. Since $h$ is bornologous, we have (by definition of the metric $d'$) that $x$ must be at most $\rho(2R) + 1$ away from $A \cap B$. Thus we can set $S = \rho(2R) + 1$.
\end{proof}

Note that in the coarse category of metric spaces, there is at most one morphism from a bounded space $K$ to a metric space $X$ (since any two coarse maps $K \rightarrow X$ are close). Thus, by general category theoretic arguments, if $A \cap B$ is bounded then the diagram (\ref{pushout}) is a pushout if and only if $X$ (together with the inclusions) is the coproduct of $A$ and $B$. Furthermore, note that in condition (b) of Theorem~\ref{disconthm}, one may choose the sets $A$ and $B$ to have nonempty intersection and to be closed (simply take all points which are at most $R$ distance from $A$ and $B$ respectively, for a suitable $R$). This leads to the following corollary, which can also be verified directly using condition (b) in Theorem~\ref{disconthm}.

\begin{Corollary}\label{decomphigson}
A metric space $X$ satisfies $\mathsf{(C)}$ (or equivalently, has a connected Higson corona) if and only if in every $\omega$-excisive decomposition $X = A \cup B$ of $X$ with $A \cap B$ bounded, one of $A$ and $B$ is bounded. 
\end{Corollary}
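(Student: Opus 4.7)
My plan is to handle the two implications separately: forward via Theorem~\ref{pushout} and the remark immediately following it, and backward by constructing an $\omega$-excisive decomposition from the coarse map to $\mathbb{Z}$ provided by Theorem~\ref{disconthm}(d). For $(\Rightarrow)$, suppose $X$ satisfies $\mathsf{(C)}$ and let $X = A \cup B$ be an $\omega$-excisive decomposition with $A \cap B$ bounded. By Theorem~\ref{pushout} the square of inclusions is a pushout in the coarse category, and the remark after that theorem reduces such a pushout to a coproduct whenever $A \cap B$ is bounded (any two coarse maps from a bounded space into $X$ are close, collapsing the pushout universal property to the coproduct one). Hence $X$ is bijectively coarsely equivalent to the coarse coproduct $A + B$. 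If both $A$ and $B$ were unbounded, the implication (c)$\Rightarrow$(a) in Theorem~\ref{disconthm} would force $X$ to violate $\mathsf{(C)}$; so one of $A, B$ must be bounded.

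For the converse, I argue contrapositively. Suppose $X$ fails $\mathsf{(C)}$; Theorem~\ref{disconthm}(d) supplies a $\rho$-bornologous proper map $f : X \to \mathbb{Z}$ whose image is unbounded both above and below, and by shifting the target I may assume $0 \in \mathsf{Im}(f)$. I then take $A = \overline{f^{-1}([0, \infty))}$ and $B = \overline{f^{-1}((-\infty, 0])}$: both closed, $A \cup B = X$, and each unbounded since $f$ is proper with image unbounded in the relevant direction. The intersection $A \cap B$ contains $f^{-1}(0)$ so is nonempty, and is bounded because any $x \in A \cap B$ lies within unit distance of points $a \in f^{-1}([0, \infty))$ and $b \in f^{-1}((-\infty, 0])$ with $d(a, b) < 2$, forcing $a, b \in f^{-1}([-\rho(2), \rho(2)])$, which is bounded by properness.

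The heart of the argument is verifying $\omega$-excisiveness. Given $R > 0$ and $x \in B(A, R) \cap B(B, R)$, I pick $a' \in f^{-1}([0, \infty))$ and $b' \in f^{-1}((-\infty, 0])$ within distance $R + 1$ of $x$; bornologousness yields $|f(a') - f(b')| \leq \rho(2R + 2)$, which combined with the sign constraints $f(a') \geq 0 \geq f(b')$ confines both points to $f^{-1}([-\rho(2R + 2), \rho(2R + 2)])$, a set bounded (by properness) with some diameter $D = D(R)$. Any fixed $p \in f^{-1}(0) \subseteq A \cap B$ also lies in this set, so $d(x, p) \leq R + 1 + D$, and $S = R + 1 + D$ works. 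The main obstacle is that $\omega$-excisiveness demands distance to $A \cap B$ itself rather than to an arbitrary bounded subset of $X$; properness of $f$ is what forces a uniform diameter estimate on the ``bridging'' fibre region and thereby makes $S$ depend only on $R$. This produces an $\omega$-excisive decomposition of $X$ with bounded intersection and both sides unbounded, contradicting the hypothesis.
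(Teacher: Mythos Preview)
Your proof is correct. The forward direction matches the paper exactly, going through Theorem~\ref{pushout} and the remark that a pushout over a bounded intersection collapses to a coproduct.

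For the converse you take a different route from the paper. The paper invokes condition~(b) of Theorem~\ref{disconthm} directly: it observes that the witnesses $A,B$ there may be replaced by their closed $R$-thickenings (so that they become closed with nonempty intersection), and that the resulting decomposition is $\omega$-excisive with bounded intersection; the verification then reduces to noting that any point close to both thickened sets must be close to the bounded set $C_{R}$, hence close to $A\cap B$ since both are bounded. You instead use condition~(d), pulling back the two half-lines in $\mathbb{Z}$ along the coarse map $f$. This is a genuine alternative: it is more explicit and self-contained, with properness of $f$ doing the work of bounding the ``bridge'' region $f^{-1}([-\rho(2R+2),\rho(2R+2)])$ uniformly in $R$, which is exactly the obstacle you flag. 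The paper's approach avoids the detour through $\mathbb{Z}$ but leaves more details to the reader; yours makes the $\omega$-excisiveness verification completely transparent.
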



The fact that proper metric spaces which do not satisfy $\mathsf{(C)}$ have disconnected Higson coronas can now be seen as a direct consequence of Proposition 1 in~\cite{HigsonMV}, while Lemma~\ref{slowcop} in this paper can be seen as a consequence of Proposition 2 in~\cite{HigsonMV} for the proper case (noting that every coarse map is slowly oscillating on a bounded subset). Theorem~\ref{discon1} of the present paper states in part that disconnectedness of the Higson corona $\nu X$ ensures the existence of a $\omega$-excisive decomposition $X = A \cup B$ of $X$ with $A$ and $B$ unbounded and $A \cap B$ bounded.

\section{Cohomological characterisation}\label{Seccohom}
In this section, we prove the following:

\begin{Theorem}\label{cohom}
A metric space $M$ has a connected Higson corona if and only if its first coarse cohomology group $HX^1(M)$ is trivial.
\end{Theorem}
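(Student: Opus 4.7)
The plan is to exploit the equivalence in Theorem~\ref{discon1} (between condition $\mathsf{(C)}$ and connectedness of the Higson corona) together with the combinatorial characterization in Theorem~\ref{disconthm}(b) to translate between topological disconnection of $\nu M$ and algebraic non-triviality of $HX^1(M)$.

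For the direction ``$\nu M$ disconnected $\Rightarrow HX^1(M) \neq 0$'': by Theorem~\ref{discon1} and Theorem~\ref{disconthm}(b), there exist unbounded $A, B \subseteq M$ with $M = A \cup B$ satisfying the separation property in (b). After replacing $B$ by $B \setminus A$ I may assume $A \cap B = \emptyset$ without losing either the unboundedness of $B$ or the separation condition. I would then propose the candidate cocycle $\phi(x, y) = \chi_A(y) - \chi_A(x)$. This is formally $d\chi_A$, so the cocycle identity is automatic. The separation condition translates directly into the controlled-support condition required of a coarse $1$-cochain: for every $R > 0$ the set $\{(x, y) : d(x,y) \leq R,\ \chi_A(x) \neq \chi_A(y)\}$ is bounded in $M^2$, so $\phi \in CX^1(M)$. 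To show $[\phi]$ is nonzero, I would argue that any primitive $f \in CX^0(M)$ with $df = \phi$ would force $\chi_A - f$ to be coarsely constant, hence $\chi_A$ would have to agree with a constant outside a bounded set --- contradicting that both $A$ and $B$ are unbounded.

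For the converse ``$HX^1(M) \neq 0 \Rightarrow \nu M$ disconnected'': given a nonzero class represented by $\phi \in CX^1(M)$, I fix a basepoint $x_0 \in M$ and consider the candidate primitive $f(x) = \phi(x_0, x)$. The cocycle condition gives $\phi = df$, while the controlled-support condition on $\phi$ translates into the following property of $f$: for every $R > 0$ there is a bounded $K_R \subseteq M$ such that $x \notin K_R$ and $d(x, y) \leq R$ imply $f(x) = f(y)$. Since $[\phi] \neq 0$, the function $f$ cannot differ from a valid $0$-cochain by a constant, and in particular $f$ is not eventually constant. I would then build a partition $(A, B)$ witnessing failure of $\mathsf{(C)}$ via Theorem~\ref{disconthm}(b): if some level set $f^{-1}(v)$ and its complement are both unbounded, I take them directly; otherwise each individual level set is bounded and the image of $f$ is infinite, so I partition the values into two subsets $V_1, V_2$ for which the preimages $f^{-1}(V_1)$ and $f^{-1}(V_2)$ are both unbounded (by enumerating the nonempty level sets so that they march off to infinity and alternating). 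In either case the local constancy of $f$ outside $K_R$ supplies the bounded set required by Theorem~\ref{disconthm}(b), and Theorem~\ref{discon1} then gives the disconnectedness of $\nu M$.

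The main obstacle I anticipate is the partition subcase in the second direction: when no single level set of $f$ provides an unbounded-unbounded decomposition, the countably many level sets must be enumerated carefully and both resulting unions verified to be unbounded and to satisfy the separation property. A secondary technicality is the first-direction verification that $d\chi_A$ genuinely lies in $CX^1(M)$ in the precise sense required by Roe's cochain complex --- this is exactly the point at which Theorem~\ref{disconthm}(b) is essential and justifies calling the argument a translation between the cohomological and categorical pictures.
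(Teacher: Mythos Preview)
Your proposal is correct and is essentially the paper's own argument in different clothing: your level sets of the primitive $f(x)=\phi(x_0,x)$ are exactly the equivalence classes of the relation $a\sim b \Leftrightarrow \phi(a,b)=0$ that the paper uses, and your cocycle $\chi_A(y)-\chi_A(x)$ is (up to sign) the paper's explicit $1$-cocycle. The only noteworthy difference is that you are more explicit about the partition step when every level set is bounded---the paper simply asserts that one can split the classes into two unbounded unions, whereas you flag this as the main technical point and sketch the inductive enumeration needed to carry it out.
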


We briefly recall the definition of coarse cohomology, following~\cite{RoeLectures}. For $M$ a metric space and $q$ a natural number, a subset $E \subseteq M^{q+1}$ is called \emph{controlled} if all the product projections $\pi_1, \ldots, \pi_{q+1}$ are close on $E$. The subset $E$ is called \emph{bounded} if every product projection is close to a constant map. Note that if $q = 0$, then every subset is controlled, while the bounded sets are precisely the bounded sets in the usual sense.

\begin{Definition}
Let $M$ be a metric space. A subset $D \subseteq M^{q+1}$ is called \emph{cocontrolled} if its intersection with every controlled set $E \subseteq M^{q+1}$ is bounded.
\end{Definition}

In the case of $q = 0$, the cocontrolled subsets are precisely the bounded ones. Given an abelian group $G$, the \emph{coarse complex of $M$ with coefficients in $G$}, denoted by $CX^\ast(M; G)$, is defined as the space of functions $\phi: M^{q+1} \rightarrow G$ with cocontrolled support. The complex can be equipped with coboundary maps $\delta: CX^{q+1}(M; G) \rightarrow CX^{q+2}(M; G)$ defined as follows:
$$
\delta \phi(x_0, \ldots, x_{q+1}) = \sum_{i=0}^{q+1} (-1)^i \phi(x_0, \ldots, \hat{x}_i, \ldots, x_{q+1}),
$$
where the `hat' denotes omission of a specific term. One checks that this defines a cochain complex, and the \emph{coarse cohomology} $HX^\ast(M; G)$ is defined to be the cohomology of this complex. When $G = \mathbb{Z}$, we denote the cohomology simply by $HX^\ast(M)$.

\begin{proof}[Proof of Theorem~\ref{cohom}]
($\Rightarrow$): Suppose $f: M^2 \rightarrow \mathbb{Z}$ represents a non-trivial cohomology class in $HX^1(M)$. In other words, $f$ satisfies $f(b, c) - f(a, c) + f(a, b) = 0$ (because $\delta f = 0$) but cannot be written as $f(a,b) = g(a) - g(b)$ for any function $g: M \rightarrow \mathbb{Z}$ with cocontrolled (equivalently, bounded) support. Define a relation $R$ on $M$ as follows: 
$$aRb \Leftrightarrow f(a,b) = 0.$$
It follows from the conditions on $f$ that $R$ is an equivalence relation. We claim that any equivalence class of $R$ has an unbounded complement. Suppose not; let $A$ be an equivalence class with bounded complement and pick $a \in A$. Define a function $g: M \rightarrow \mathbb{Z}$ as follows:
$$
g(x) = f(x,a).
$$
Note that $g$ has bounded support, since the complement of $A$ is assumed to be bounded, and that $\delta g = f$, a contradiction. Thus we conclude that we can divide $M$ into two unbounded sets $A$ and $B$, each a union of equivalence classes. Let $S > 0$. Then $a \in A$, $b \in B$ and $d(a,b) \leq S$ implies
$$
(a, b) \in \mathsf{supp}(f) \cap \{(x,y) \mid d(x,y) \leq S\}
$$
where the intersection is bounded because $f$ has cocontrolled support. It follows that $A$ and $B$ satisfy the conditions in (b) of Theorem~\ref{disconthm}, which gives the required result. 

($\Leftarrow$): Let $M = A \cup B$ with $A$ and $B$ satisfying the conditions in (b) of Theorem~\ref{disconthm}. We may suppose that $A$ and $B$ are disjoint (simply take $B = M \setminus A$ in the event this is not the case). Let $f: M^2 \rightarrow \mathbb{Z}$ be the map
\[
f(x,y) = \begin{cases}
1 & x \in A, y \in B \\
-1 & x \in B, y \in A \\
0 & \mathrm{otherwise} 
\end{cases}
\]
It is easy to check that $f$ represents a non-trivial cohomology class in $HX^1(M)$. Indeed, the conditions on $A$ and $B$ force $f$ to have cocontrolled support, while the unboundedness of $A$ and $B$ ensure that $f$ is non-trivial in cohomology.
\end{proof}

Theorem~\ref{cohom} shows that coarse coproducts are not preserved (i.e.~taken to direct sums) when taking first cohomology groups, since a space may be the coproduct of two unbounded spaces each having a connected Higson corona. For proper metric spaces, one part of the proof above (namely that having a disconnected Higson corona implies non-trivial first cohomology), follows from Corollary~\ref{decomphigson} in the previous section and the result in~\cite{HigsonMV} that there is a long exact Mayer-Vietoris sequence
$$ \xymatrix{
\ldots \ar[r] & HX^0(A) \oplus HX^0(B) \ar[r] & HX^0(A \cap B) \ar[r] & HX^1(M) \ar[r] & \ldots 
}
$$
for any $\omega$-excisive decomposition $M = A \cup B$. Indeed, if $A$ and $B$ are unbounded and $A \cap B$ is bounded, then $HX^0(A)$ and $HX^0(B)$ are trivial, while $HX^0(A \cap B)$ is isomorphic to $\mathbb{Z}$ (see Section 5.1 of~\cite{RoeLectures}).

\section{Geodesic spaces and finitely generated groups}\label{SecGroups}
We expect that connectedness of the Higson corona should be the same as being ``connected at infinity'' for certain spaces. In this section we show such a result for the case of geodesic spaces. Recall that a metric space $X$ is said to be \emph{geodesic} (see for example~\cite{NowakYu}) if for any two points $x, y\in X$ there is an isometric embedding $\gamma$ of the interval $[0, d(x,y)]$ into $X$ with $\gamma(0) = x$, $\gamma(d(x,y)) = y$. We refer to the image of $\gamma$ as the geodesic from $x$ to $y$. 

\begin{Theorem}\label{geod1}
The following are equivalent for a geodesic metric space $X$:
\begin{itemize}
\item[(a)] the Higson corona of $X$ is (topologically) disconnected;
\item[(b)] there exists a bounded set $X_0 \subseteq X$ such that for any bounded set $C$ containing $X_0$, $X \setminus C$ is topologically disconnected.
\end{itemize}
\end{Theorem}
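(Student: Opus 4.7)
The proof naturally splits into the two implications. For (a) $\Rightarrow$ (b), the geodesic hypothesis is not needed. Invoking Theorems~\ref{discon1} and~\ref{disconthm}, I obtain disjoint unbounded sets $A, B$ with $X = A \cup B$, together with a bounded $C_R$ for every $R > 0$ such that $d(A \setminus C_R, B \setminus C_R) \geq R$. Setting $X_0 = C_1$, for any bounded $C \supseteq X_0$ the pieces $A \setminus C$ and $B \setminus C$ are non-empty (by unboundedness of $A$ and $B$) and separated by distance at least $1$, so they form an open partition of $X \setminus C$, witnessing disconnection.

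For (b) $\Rightarrow$ (a), the geodesic hypothesis enters through the following observation: if $K \subseteq X$ is closed and $u, v$ lie in distinct connected components of $X \setminus K$, then any geodesic from $u$ to $v$ must meet $K$ (otherwise its image would be a connected subset of $X \setminus K$ containing both points), yielding $d(u,v) \geq d(u, K) + d(v, K)$. Replacing $X_0$ by its closure (still bounded), $X \setminus X_0$ is open, and since geodesic spaces are locally path-connected (small balls are star-shaped via geodesics from the centre), its connected components are open.

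The strategy is to partition the components of $X \setminus X_0$ into two subfamilies whose unions $A', B'$ are each unbounded, and to set $A = A'$ and $B = B' \cup X_0$. I would argue this is always possible by cases: (i) if $X \setminus X_0$ has at least two unbounded components, place one in $A'$ and all others in $B'$; (ii) if it has exactly one unbounded component $U$, let $V$ denote the union of the bounded components --- then $V$ must be unbounded, because otherwise $X_0 \cup V$ is a bounded set containing $X_0$ with connected complement $U$, contradicting (b) --- and set $A' = U$, $B' = V$; (iii) if all components are bounded, use that $X \setminus X_0$ is unbounded to extract points $w_n$ in pairwise-distinct bounded components with $d(w_n, X_0) \to \infty$, and split these components alternately between $A'$ and $B'$ (assigning remaining components arbitrarily).

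In every case, any $u \in A \setminus X_0$ and $v \in B \setminus X_0$ lie in distinct components of $X \setminus X_0$ by construction, so the geodesic observation gives $d(u,v) \geq d(u, X_0) + d(v, X_0)$. Taking $C_R = B(X_0, R/2)$, which is bounded since $X_0$ is, we obtain $d(A \setminus C_R, B \setminus C_R) \geq R$, so $A, B$ satisfy condition (b) of Theorem~\ref{disconthm} and hence by Theorem~\ref{discon1} the Higson corona of $X$ is disconnected. The main obstacle is the case analysis in (b) $\Rightarrow$ (a), particularly case (iii), where the bounded ``island'' components must be shown to admit an unbounded two-part partition; the key point there is that with all components bounded, infinitely many of them must reach arbitrarily far from $X_0$, allowing one to produce the required separating sequence.
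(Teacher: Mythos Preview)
Your proof is correct and follows essentially the same route as the paper: both directions pass through condition~(b) of Theorem~\ref{disconthm}, and for (b)$\Rightarrow$(a) both arguments partition the components of $X\setminus X_0$ into two unbounded families and use that any geodesic between them must meet $X_0$. Your case analysis (i)--(iii) is more explicit than the paper's, which only notes that every component of $X\setminus X_0$ has unbounded complement and then asserts the partition ``follows''; your handling of case~(iii) in particular spells out what the paper leaves implicit. (One small remark: Theorem~\ref{disconthm}(b) does not give $A$ and $B$ disjoint, but since $A\cap B$ is bounded you may replace $B$ by $X\setminus A$ without loss.)
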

\begin{proof}
(a) $\Rightarrow$ (b): Suppose $X = A \cup B$ with $A$ and $B$ satisfying the conditions in (b) in Theorem~\ref{disconthm}. Let $X_0$ be the bounded set such that $a \in A \setminus X_0$, $b \in B\setminus X_0 \Rightarrow d(a,b) \geq 1$. Then the result follows easily from the fact that $A$ and $B$ are unbounded.

(b) $\Rightarrow$ (a): Let $X_0$ be as in (b). By the assumption on $X_0$, every connected component of $X \setminus X_0$ must have an unbounded complement in $X \setminus X_0$. It follows that we can divide $X \setminus X_0$ into two sets $A$ and $B'$, each a union of connected components. By connectedness, the geodesic from a point $a \in A \setminus X_0$ to a point $b \in B' \setminus X_0$ must pass through $X_0$. Thus, for any $R > 0$, the distance between points $a \in A \setminus B(X_0, R)$ and $b \in B' \setminus B(X_0, R)$ is at least $R$, so $A$ and $B = B' \cup X_0$ satisfy the conditions in (b) in Theorem~\ref{disconthm}.
\end{proof}

\begin{Example}
The above theorem shows that $\mathbb{R}^n$ has a connected Higson corona for $n \neq 1$, and a disconnected Higson corona when $n = 1$. Note, however, that by Theorem 5 of~\cite{Keesling}, the Higson corona of $\mathbb{R}^n$ is neither locally connected nor arcwise connected for $n > 1$. 
\end{Example}

We now consider the special case of finitely generated groups, seen as metric spaces. Let $G$ be a finitely generated group generated by a finite set $S$ which is closed under taking inverses. We can construct two coarsely equivalent metric spaces associated to the pair $(G, S)$:
\begin{itemize}
\item the group $G$ equipped with the \emph{word length metric}, i.e.~where $d(g,h)$ is the length of the minimal representation of $gh^{-1}$ using elements of $S$, and
\item the \emph{Cayley graph} $\Gamma(G, S)$ of $G$, i.e.~the graph with vertices the elements of $G$ and an edge from $g$ to $gs$ for every $g \in G$, $s \in S$, viewed as a 1-complex and equipped with the path length metric.
\end{itemize}

It turns out that both of these metric spaces do not depend, up to coarse equivalence, on the choice of finite generating set $S$ (see for example~\cite{NowakYu}). In particular, the Higson corona of a group $G$ with the word length metric is invariant under choice of generating set, and is moreover homeomorphic to the Higson corona of any Cayley graph associated to $G$. The \emph{number of ends} of $G$ is defined to be the number of (topological) ends of its Cayley graph. Note that the number of ends of the Cayley graph also does not depend on the finite generating set $S$ (see for example Section 13 of~\cite{Goeghegan} for more on ends of groups).

\begin{Corollary}\label{fingengroup}
A finitely generated group $G$ with the word length metric has a connected Higson corona if and only if it has at most one end.
\end{Corollary}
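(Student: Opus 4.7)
The plan is to pass to the Cayley graph $\Gamma = \Gamma(G,S)$, which is a geodesic metric space coarsely equivalent to $G$ equipped with the word-length metric, so that $\nu G$ and $\nu \Gamma$ are homeomorphic, and then to apply Theorem~\ref{geod1} to $\Gamma$. The case of finite $G$ is immediate, since then $\Gamma$ is bounded, $\nu G$ is empty (hence trivially connected), and the number of ends is zero. I therefore assume $G$ is infinite and $\Gamma$ is unbounded.

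By Theorem~\ref{geod1}, $\nu \Gamma$ is disconnected if and only if there exists a bounded $X_0 \subseteq \Gamma$ such that $\Gamma \setminus C$ is topologically disconnected for every bounded $C \supseteq X_0$. My goal is to identify this condition with $G$ having at least two ends. The essential tool is local finiteness: since $S$ is finite, $\Gamma$ is a locally finite graph, so removing any bounded subset leaves only finitely many connected components, each of which is either unbounded or bounded (equivalently, contains finitely many vertices). Given this, if $G$ has at least two ends, I would fix a bounded $X_0$ for which $\Gamma \setminus X_0$ already has at least two unbounded components $U_1, U_2$; for every bounded $C \supseteq X_0$, further point-removal cannot merge components, and both $U_1 \setminus C$ and $U_2 \setminus C$ remain nonempty (since $U_1$, $U_2$ are unbounded while $C$ is bounded), yielding distinct components of $\Gamma \setminus C$. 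Conversely, if $G$ has at most one end, then for any bounded $X_0$ the complement $\Gamma \setminus X_0$ has at most one unbounded component together with finitely many bounded components; absorbing the bounded components into $X_0$ produces a bounded set $C \supseteq X_0$ with $\Gamma \setminus C$ connected, violating the condition in Theorem~\ref{geod1}.

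The main obstacle in this argument is really just the bookkeeping in the second direction: one must know that the bounded components of $\Gamma \setminus X_0$ can all be absorbed into a single bounded enlargement of $X_0$, and this is precisely the content of local finiteness of $\Gamma$. Without it, the equivalence would fail, since a general geodesic space could have infinitely many small bounded ``holes'' with no bounded superset of $X_0$ containing them all; it is the finiteness of the generating set $S$ that rules this out for Cayley graphs and makes the reduction to ends go through.
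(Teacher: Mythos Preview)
Your proof is correct and follows essentially the same route as the paper: pass to the Cayley graph, invoke Theorem~\ref{geod1}, and use local finiteness to reduce to the statement that the complement of a bounded set has only finitely many components, so that disconnectedness for all larger bounded sets forces at least two of them to be unbounded. The paper organizes the argument around the exhaustion by closed balls $\overline{B(g,n)}$ rather than an arbitrary bounded $X_0$, but the key step---absorbing the finitely many bounded components into the removed set---is identical.
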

\begin{proof}
By the above remarks, a group $G$ with the word length metric has a connected Higson corona if and only if its Cayley graph $\Gamma(G, S)$ does. Note that $\Gamma(G, S)$ is a geodesic space. Fix a vertex $g$ in $\Gamma(G, S)$ and consider the cover of $\Gamma(G, S)$ by compact sets
$$
\overline{B(g, 1)} \subseteq \overline{B(g, 2)} \subseteq \overline{B(g, 3)} \subseteq  \cdots
$$
Here we use the fact that $\Gamma(G, S)$ is locally finite, i.e.~every vertex is an endpoint of finitely many edges. An end of $\Gamma(G, S)$ is then a sequence
$$
U_1 \supseteq U_2 \supseteq U_3 \supseteq \cdots
$$
where for each $i$, $U_i$ is a connected component of $\Gamma(G, S) \setminus \overline{B(g, i)}$. If $\Gamma(G, S)$ has more than one end, then there must be an $n$ such that $\Gamma(G, S) \setminus \overline{B(g, n)}$ has two unbounded connected components, in which case $\Gamma(G, S)$ has a disconnected Higson corona by Theorem~\ref{geod1}. Conversely, if $\Gamma(G, S)$ has a disconnected Higson corona, then by Theorem~\ref{geod1} there is some $n$ such that for any bounded set $K$ containing $\overline{B(g, n)}$, $\Gamma(G, S) \setminus K$ has at least two connected components. Notice that since $\Gamma(G, S)$ is locally finite, $\Gamma(G, S) \setminus \overline{B(g, n)}$ has finitely many connected components. It follows that two of its connected components must be unbounded, so that $\Gamma(G, S)$ has more than one end as required.
\end{proof}

Note that Corollary~\ref{fingengroup} also follows from Theorem 13.5.5 in~\cite{Goeghegan} and the fact that the usual group cohomology of a finitely generated group $G$ (with coefficients in the group ring $\mathbb{Z}G$) coincides with the coarse cohomology of the group as a metric space (see Example 5.21 of~\cite{RoeLectures}). 

\begin{Remark}
A complete characterisation of finitely generated groups with more than one end is already known. A finitely generated group can have either 0, 1, 2 or infinitely many ends; a finitely generated group $G$ has two ends if and only if $G$ has an infinite cyclic subgroup of finite index. The characterisation for infinitely many ends is given by a theorem of Stallings~\cite{Stallings68, Stallings71}. For more details and proofs of these facts, we refer the reader to~\cite{Goeghegan}. 
\end{Remark}

\section{Abstract coarse spaces}\label{SecAbstract}
In this section we consider the more general setting of coarse spaces. Most of the results of the previous sections generalise to this context, so long as one works with coarse spaces which are ``connected'' in the sense of~\cite{RoeLectures}, i.e.~in which finite sets are bounded. 

Recall from~\cite{RoeLectures} that a \emph{coarse space} is a pair $(X, \mathcal{X})$ where $X$ is a set and $\mathcal{X}$ is a family of binary relations on $X$ which contains the diagonal $\Delta$ and which is closed under taking subrelations, inverses, products (i.e.~composition of relations) and finite unions. A map $f$ between (the underlying sets of) coarse spaces $(A, \mathcal{A})$ and $(B, \mathcal{B})$ is called \emph{bornologous} if $(f \times f)(R) \in \mathcal{B}$ for every $R \in \mathcal{A}$. Given a coarse space $(A, \mathcal{A})$, a subset $B$ of $A$ is called \emph{bounded} if it is contained in $\{a \in A \mid aRx\}$ for some $R \in \mathcal{A}$ and $x \in A$. The notion of proper map can thus be defined for coarse spaces. Two maps $f,g: (A, \mathcal{A}) \rightarrow (B, \mathcal{B})$ are said to be \emph{close} if $\{(f(a), g(a)) \mid a \in A\}\in \mathcal{B}$. 

Every metric $d$ on a set $A$ induces a \emph{bounded coarse structure}, consisting of all those relations $R$ for which the set $\{d(a,b)\mid aRb\}$ is bounded. A map $f: A \rightarrow B$ between metric spaces is bornologous with respect to the metrics if and only if it is bornologous with respect to the respective bounded coarse structures. Thus $\mathbf{Met}_\mathbf{Born}$ can be viewed as a full subcategory of the category of coarse spaces and bornologous maps. A coarse space $(A, \mathcal{A})$ is called \emph{connected}~\cite{RoeLectures} if every finite subset of $A \times A$ is in $\mathcal{A}$. Bounded coarse structures associated to metrics (which are not allowed to take the value $\infty$) are always connected. We say that a coarse structure on a set $A$ is \emph{metrizable} if it is the bounded coarse structure associated to a metric on $A$. We recall the following result.

\begin{Theorem}[\cite{RoeLectures}]\label{countablygen}
A connected coarse structure $\mathcal{A}$ on $A$ is metrizable if and only if it is countably generated.
\end{Theorem}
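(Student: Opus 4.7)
The plan is to prove both directions by direct construction. For the easier forward implication, if $\mathcal{A}$ is the bounded coarse structure associated to a metric $d$, I would exhibit an explicit countable generating set, namely the relations $E_n = \{(x,y) \in A\times A \mid d(x,y)\leq n\}$ for $n \in \mathbb{N}$. Each $E_n$ lies in $\mathcal{A}$, and any $R \in \mathcal{A}$ has bounded $d$-diameter, so $R \subseteq E_n$ for some $n$; since $\mathcal{A}$ is closed under subrelations, the $E_n$ generate $\mathcal{A}$.

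For the harder converse, suppose we are given countable generators $S_1, S_2, \ldots$ of $\mathcal{A}$. First I would pass to a more convenient generating family by setting
$$T_n = \Delta \cup \bigcup_{i \leq n}\bigl(S_i \cup S_i^{-1}\bigr),$$
so that the $T_n$ form an increasing chain of symmetric, reflexive relations that still generate $\mathcal{A}$. Here is where the connectedness hypothesis enters: for every pair $(x,y) \in A\times A$, the singleton $\{(x,y)\}$ lies in $\mathcal{A}$ and hence is a subrelation of some $T_n$, so $\bigcup_n T_n = A\times A$. Next, using closure of $\mathcal{A}$ under composition together with the fact that the $T_n$ generate, I would inductively pick a subsequence $R_k := T_{n_k}$ satisfying the key condition $R_k \circ R_k \subseteq R_{k+1}$, and I would set $R_0 = \Delta$.

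The metric is then defined by
$$d(x,y) = \min\{\,n \geq 0 \mid (x,y) \in R_n\,\},$$
which is well-defined and finite because $\bigcup_n R_n = A\times A$. The main obstacle, and essentially the only nontrivial verification, is the triangle inequality. Suppose $d(x,y) = m$ and $d(y,z) = k$, and assume $m \leq k$. Then $(x,z) \in R_m \circ R_k \subseteq R_k \circ R_k \subseteq R_{k+1}$, so $d(x,z) \leq k+1$. If $m \geq 1$ this is bounded by $m+k$, and if $m=0$ then $x=y$ (because $R_0 = \Delta$), so $d(x,z) = k = m+k$ directly. Symmetry is immediate from $R_n = R_n^{-1}$, and non-degeneracy follows from $R_0 = \Delta$.

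Finally, I would check that the bounded coarse structure of $d$ coincides with $\mathcal{A}$. One inclusion is clear, since each $R_n \subseteq \{(x,y) \mid d(x,y) \leq n\}$ and the $R_n$ generate $\mathcal{A}$. For the reverse, if $R$ has $d$-diameter bounded by $N$, then by the definition of $d$ each pair $(x,y) \in R$ lies in $R_N$, so $R \subseteq R_N \in \mathcal{A}$. This gives metrizability and completes the proof.
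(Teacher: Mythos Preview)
The paper does not supply its own proof of this theorem; it is quoted from Roe's \emph{Lectures on coarse geometry} and used as a black box. So there is nothing in the paper to compare against directly, but your argument still has a genuine gap worth flagging.

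Your overall strategy is the standard one, but the construction of the $R_k$ does not work as written. You set $T_n = \Delta \cup \bigcup_{i\le n}(S_i\cup S_i^{-1})$ and then claim you can extract a subsequence $R_k = T_{n_k}$ with $R_k\circ R_k\subseteq R_{k+1}$. This fails because the $T_n$ are not cofinal in $\mathcal{A}$: ``countably generated'' means $\mathcal{A}$ is the smallest coarse structure containing the $S_i$, and that closure involves arbitrary finite \emph{compositions} of the generators, which your $T_n$ omit entirely. Concretely, take $A=\mathbb{Z}$ with the single generator $S_1=\{(n,n+1):n\in\mathbb{Z}\}$; then every $T_n$ equals $\{(x,y):|x-y|\le 1\}$, while $T_n\circ T_n=\{(x,y):|x-y|\le 2\}$ is contained in no $T_m$. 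The same defect undermines your use of connectedness: the singleton $\{(x,y)\}$ lies in $\mathcal{A}$, but there is no reason it lies in any $T_n$, so you cannot conclude $\bigcup_n T_n=A\times A$.

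The fix is to build compositions into the sequence from the start: set $R_0=\Delta$ and $R_{n+1}=(R_n\circ R_n)\cup S_{n+1}\cup S_{n+1}^{-1}$. Then each $R_n\in\mathcal{A}$, the $R_n$ are increasing, symmetric, reflexive, and satisfy $R_n\circ R_n\subseteq R_{n+1}$ by construction; an easy induction on the complexity of expressions in the generators shows the $R_n$ are cofinal in $\mathcal{A}$, and connectedness then gives $\bigcup_n R_n = A\times A$. With this corrected sequence your definition of $d$ and the remaining verifications (triangle inequality, equality of the two coarse structures) go through exactly as you wrote them.
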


Let $\mathcal{A}_0$ be a collection of relations on a set $A$. Then it is easy to show that there exists a smallest coarse structure on $A$ containing $\mathcal{A}_0$, which we denote by $\overline{\mathcal{A}_0}$. By ``countably generated'' in Theorem~\ref{countablygen}, we mean there is a countable set $\mathcal{A}_0$ of relations such that $\overline{\mathcal{A}_0} = \mathcal{A}$.

\begin{Lemma} \label{closure}
Let $\mathcal{A}$ be a set of relations on a set $A$ and let $f: A \rightarrow B$ be a map. Then we have
$$(f \times f)(\overline{\mathcal{A}}) \subseteq \overline{(f \times f)(\mathcal{A})}.$$
\end{Lemma}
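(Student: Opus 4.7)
The plan is to prove the inclusion by exhibiting a coarse structure on $A$, contained inside $\overline{\mathcal{A}}$, that still contains $\mathcal{A}$. Concretely, set $\mathcal{B} = \overline{(f\times f)(\mathcal{A})}$ and define
$$\mathcal{C} \;=\; \{\, R \subseteq A \times A \;\mid\; (f\times f)(R) \in \mathcal{B}\,\}.$$
If I can show that $\mathcal{C}$ is a coarse structure on $A$ and that $\mathcal{A} \subseteq \mathcal{C}$, then by minimality of $\overline{\mathcal{A}}$ we get $\overline{\mathcal{A}} \subseteq \mathcal{C}$, which is precisely the claim.

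The inclusion $\mathcal{A} \subseteq \mathcal{C}$ is immediate, since for $R \in \mathcal{A}$ the relation $(f\times f)(R)$ lies in $(f\times f)(\mathcal{A}) \subseteq \mathcal{B}$. Thus the bulk of the argument amounts to checking the five axioms for $\mathcal{C}$ to be a coarse structure. Each reduces to an elementary set-theoretic identity or inclusion for the map $f\times f$, together with the corresponding closure axiom for $\mathcal{B}$:
\begin{itemize}
\item $(f\times f)(\Delta_A) \subseteq \Delta_B \in \mathcal{B}$, so by closure under subrelations $\Delta_A \in \mathcal{C}$;
\item if $S \subseteq R$ then $(f\times f)(S) \subseteq (f\times f)(R)$, giving closure under subrelations;
\item $(f\times f)(R^{-1}) = \bigl((f\times f)(R)\bigr)^{-1}$, giving closure under inverses;
\item $(f\times f)(R \cup S) = (f\times f)(R) \cup (f\times f)(S)$, giving closure under finite unions;
\item $(f\times f)(R \circ S) \subseteq (f\times f)(R) \circ (f\times f)(S)$ (unwind the definition of composition of relations), giving closure under products.
\end{itemize}

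None of these verifications involves any real difficulty; the only step that requires a moment of thought is the composition one, where the inclusion can be strict if $f$ is not injective, but an inclusion is all that is needed since $\mathcal{B}$ is closed under subrelations. I expect this to be the only place where a reader might pause, so I would spell it out explicitly and leave the other four as one-line observations. The whole argument is just the standard principle that the preimage under $f \times f$ of a coarse structure on $B$ is a coarse structure on $A$, applied to the coarse structure generated on $B$ by $(f\times f)(\mathcal{A})$.
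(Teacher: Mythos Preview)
Your proof is correct and follows essentially the same approach as the paper: the paper's proof consists solely of stating the composition inclusion $(f\times f)(R\circ S) \subseteq (f\times f)(R)\circ (f\times f)(S)$, which is exactly the one nontrivial verification you single out. Your version is simply a more explicit rendering of the same argument, packaged as ``the preimage of a coarse structure under $f\times f$ is a coarse structure.''
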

\begin{proof}
This follows from the fact that for two relations $R$ and $S$ on $A$, 
$$
(f \times f)(R\circ  S) \subseteq (f \times f)(R) \circ (f \times f)(S).
$$
\end{proof}

\begin{Proposition}\label{coarsecop}
The category of connected coarse spaces and bornologous maps admits arbitrary coproducts. Moreover, the countable (or finite) coproduct of metrizable coarse spaces is metrizable.
\end{Proposition}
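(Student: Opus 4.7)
The plan is to give an explicit construction of the coproduct and then verify its universal property using Lemma~\ref{closure}, followed by a generator-counting argument for the metrizability claim.

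Given a family $\{(A_i, \mathcal{A}_i)\}_{i \in I}$ of connected coarse spaces, I would take $A = \bigsqcup_i A_i$ as the underlying set, with the obvious injections $\iota_i : A_i \to A$. For each $i$, let $\widetilde{\mathcal{A}}_i = (\iota_i \times \iota_i)(\mathcal{A}_i)$, which I regard as a family of relations on $A$. Choose a base point $a_i \in A_i$ for each $i$ and set $\mathcal{F} = \{\{(a_i, a_j)\} : i, j \in I,\ i \neq j\}$. I would then define the coproduct coarse structure as
$$
\mathcal{A} \ =\ \overline{\,\bigcup_{i \in I} \widetilde{\mathcal{A}}_i \ \cup\ \mathcal{F}\,}.
$$
The first thing to check is that $(A, \mathcal{A})$ is connected. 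Since each $\mathcal{A}_i$ contains $\{(a, a_i)\}$ for every $a \in A_i$ (because $\mathcal{A}_i$ is connected), composing through $\{(a_i, a_j)\} \in \mathcal{F}$ with $\{(a_j, b)\} \in \mathcal{A}_j$ yields $\{(a, b)\} \in \mathcal{A}$ for arbitrary $a \in A_i$, $b \in A_j$. Finite unions then recover all finite relations on $A$, so $\mathcal{A}$ is connected. Each $\iota_i$ is bornologous by construction.

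For the universal property, suppose $(Z, \mathcal{Z})$ is a connected coarse space and $f_i : A_i \to Z$ is a family of bornologous maps. Let $h : A \to Z$ be the unique set map with $h \iota_i = f_i$. Uniqueness up to nothing (i.e.~uniqueness of $h$ as a map of sets) is automatic; only bornologousness requires argument. By Lemma~\ref{closure}, it suffices to check that $(h \times h)(R) \in \mathcal{Z}$ for every $R$ in the generating set $\bigcup_i \widetilde{\mathcal{A}}_i \cup \mathcal{F}$. For $R \in \widetilde{\mathcal{A}}_i$, we have $(h \times h)(R) = (f_i \times f_i)((\iota_i \times \iota_i)^{-1}(R)) \in \mathcal{Z}$ by bornologousness of $f_i$. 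For $R \in \mathcal{F}$, $(h \times h)(R)$ is a finite subset of $Z \times Z$ and hence belongs to $\mathcal{Z}$ because $Z$ is connected. This establishes the coproduct.

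For the second claim, I would invoke Theorem~\ref{countablygen}: a connected coarse structure is metrizable if and only if it is countably generated. If $I$ is countable and each $\mathcal{A}_i$ is generated by a countable set $\mathcal{A}_i^0$, then the generating set
$$
\bigcup_{i \in I} (\iota_i \times \iota_i)(\mathcal{A}_i^0)\ \cup\ \{\{(a_i, a_j)\} : i \neq j\}
$$
is a countable union of countable sets, hence countable. Thus $\mathcal{A}$ is countably generated and $(A, \mathcal{A})$ is metrizable. The main subtlety, and the only nontrivial point beyond routine bookkeeping, is realizing that one need not throw \emph{all} finite subsets of $A \times A$ into the generating family: the base points $a_i$ suffice to generate every singleton relation between different components, which keeps the generator count countable when $I$ is.
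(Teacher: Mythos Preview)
Your proof is correct and follows essentially the same approach as the paper: the same generated coarse structure on the disjoint union (with base-point singletons thrown in to ensure connectedness), the same appeal to Lemma~\ref{closure} for the universal property, and the same invocation of Theorem~\ref{countablygen} for metrizability. Your verification of the universal property is more explicit than the paper's, and your generator-counting for metrizability is slightly more direct (a plain countable union of countable sets) where the paper packages the generators via a diagonal enumeration $\{\bigcup_{i=1}^{k-1}\mathcal{A}_{i,k-i}\}_k$, but these are presentational differences only.
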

\begin{proof}
Suppose $(A_\alpha, \mathcal{A}_\alpha)_{\alpha \in I}$ is a family of connected coarse spaces. For each $A_\alpha$, pick a base point $a_\alpha \in \iota_\alpha(A_\alpha)$. Define a coarse structure $\sum_\alpha \mathcal{A}_\alpha$ on the disjoint union $\sum_\alpha A_\alpha$ of the $A_\alpha$ as follows:
$$
\sum_\alpha \mathcal{A}_\alpha = \overline{\bigcup_\alpha \mathcal{A}_\alpha \cup \bigcup_{\alpha \in I} \bigcup_{\alpha' \in I} \{(a_\alpha, a_{\alpha'})\}},
$$
Note that since each $(A_\alpha, \mathcal{A}_\alpha)$ is connected, $(\sum_\alpha A_\alpha, \sum_\alpha \mathcal{A}_\alpha)$ is also connected. Moreover, it is clearly the smallest connected coarse structure containing all the $\mathcal{A}_\alpha$, so using Lemma~\ref{closure} it is easy to check that this gives the required coproduct. 

Suppose now that $I = \mathbb{N}$ and that each $(A_i, \mathcal{A}_i)$ is metrizable, with each $\mathcal{A}_i$ generated by relations $\{\mathcal{A}_{i, 1}, \mathcal{A}_{i, 2}, \ldots \}$. Note that the set
$$
\bigcup_{i \in \mathbb{N}} \bigcup_{j \in \mathbb{N}} \{(a_i, a_{j})\},
$$
is countable, as is the set
$$
\{ \bigcup_{i = 1}^{k-1} \mathcal{A}_{i, k-i} \mid k \in \{2,3 \ldots\}  \}
$$
and together they generate $\sum_\alpha \mathcal{A}_\alpha$. Thus by Theorem~\ref{countablygen}, $\sum_\alpha \mathcal{A}_\alpha$ is metrizable.
\end{proof}
It is easy to check that the construction of binary coproducts given in the proposition above gives the binary coproduct in the subcategory of coarse maps as well as in the \emph{coarse category of connected coarse spaces}, i.e.~the category whose objects are connected coarse spaces and whose morphisms are equivalence classes of coarse maps under the closeness relation. 

Throughout the rest of this section, by a \emph{coarse coproduct} of two connected coarse spaces, we mean the coproduct of the two spaces in the category of connected coarse spaces and bornologous maps. By the proof of Proposition~\ref{coarsecop}, the coarse coproduct of $(X, \mathcal{X})$ and $(Y, \mathcal{Y})$ is given by 
$$(X + Y, \overline{\mathcal{X} \cup \mathcal{Y} \cup \{(x_0, y_0)\} }),$$
where $x_0 \in X$ and $y_0 \in Y$ are arbitrary chosen base points.

Proposition~\ref{coarsecop} gives an alternative proof that the category of metric spaces and bornologous maps admits finite and countable coproducts. The explicit construction of the metric $d$ in Proposition~\ref{countablecop} can be derived from the proof of Theorem~\ref{countablygen} (see~\cite{RoeLectures}) and the second half of the proof of Proposition~\ref{coarsecop} above. In the case of binary coproducts (see Proposition~\ref{coprod1}), the description of the metric follows naturally from the following lemma. In order to state the lemma, we introduce the following notation. If $X$ and $Y$ are sets and $R$ is a relation on $X$, we denote by $R_r$ the smallest reflexive relation on the disjoint union $X + Y$ whose restriction to $X$ is $R$ (it is nothing but the union of $R$ with the diagonal relation on $X + Y$).

\begin{Lemma} \label{techcopr} 
Let $(X + Y, \mathcal{X} + \mathcal{Y})$ be the coarse coproduct of $(X, \mathcal{X})$ and $(Y, \mathcal{Y})$, and let $x_0 \in X$, $y_0 \in Y$. Then every relation in $\mathcal{X} + \mathcal{Y}$ is contained in a relation of the form
$$
(R_r \circ U \circ S_r) \cup (S_r \circ U \circ R_r),
$$
where $R$ and $S$ are in $\mathcal{X}$ and $\mathcal{Y}$ respectively, and $U = \{(x_0, y_0), (y_0, x_0)\} \cup \Delta$. 
\end{Lemma}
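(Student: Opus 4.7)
The plan is to introduce the family
\[
\mathcal{F} = \{\, V \subseteq (X+Y)^2 \mid V \subseteq T(R,S) \text{ for some } R \in \mathcal{X},\ S \in \mathcal{Y}\,\},
\]
where $T(R,S) := (R_r \circ U \circ S_r) \cup (S_r \circ U \circ R_r)$ is the template appearing in the statement, and then to verify that $\mathcal{F}$ is itself a coarse structure on $X+Y$ containing the generating set $\mathcal{X} \cup \mathcal{Y} \cup \{(x_0,y_0)\}$. Since by construction $\mathcal{X}+\mathcal{Y}$ is the smallest coarse structure containing these generators, this will yield $\mathcal{X}+\mathcal{Y} \subseteq \mathcal{F}$, which is exactly the conclusion of the lemma.

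Most of the coarse structure axioms and the check of the generators will be routine. The diagonal satisfies $\Delta_{X+Y} \subseteq U = T(\Delta_X,\Delta_Y)$; closure under subrelations is immediate from the definition of $\mathcal{F}$; since $U = U^{-1}$ one has $T(R,S)^{-1} = T(R^{-1},S^{-1})$, and $\mathcal{X},\mathcal{Y}$ are themselves closed under inverses; and the monotonicity inclusion $T(R,S) \cup T(R',S') \subseteq T(R \cup R', S \cup S')$ takes care of binary (and hence finite) unions. For the generators: any $R \in \mathcal{X}$ embeds into $R_r \circ U \circ (\Delta_Y)_r \subseteq T(R,\Delta_Y)$ by using the trivial pieces coming from the diagonal components of $U$ and $(\Delta_Y)_r$; the case $S \in \mathcal{Y}$ is symmetric; and $\{(x_0,y_0)\} \subseteq U \subseteq T(\Delta_X,\Delta_Y)$.

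The heart of the argument, and the main obstacle I expect, is closure of $\mathcal{F}$ under composition: given $R,R' \in \mathcal{X}$ and $S,S' \in \mathcal{Y}$, I would need to exhibit $\tilde R \in \mathcal{X}$ and $\tilde S \in \mathcal{Y}$ with $T(R,S) \circ T(R',S') \subseteq T(\tilde R,\tilde S)$. Expanding the composition produces a union of four six-factor words in the alphabet $\{R_r, S_r, R'_r, S'_r, U\}$, and the plan is to collapse each back to a three-factor word of the template shape. Three identities will drive the collapse: $U \circ U = U$ (so any run of consecutive $U$'s absorbs into a single $U$); each $(-)_r$ contains the diagonal of $X+Y$ (so trivial steps can be inserted or absorbed at will); and the cross-type products satisfy $S_r \circ R'_r = \Delta_{X+Y} \cup S \cup R'$, and symmetrically for $R_r \circ S'_r$, since $S \circ R' = R \circ S' = \emptyset$ (the $X$- and $Y$-halves of $X+Y$ being disjoint). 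A mechanical case analysis, keeping track of which side of the coproduct the intermediate points lie in and whether the basepoint bridges $(x_0,y_0), (y_0,x_0)$ are traversed, should then show that one may take $\tilde R := R \cup R' \cup R R' \cup R' R$ and $\tilde S := S \cup S' \cup S S' \cup S' S$, both still in $\mathcal{X}$ and $\mathcal{Y}$ respectively. The bookkeeping is tedious but each individual case reduces immediately to the three identities above; geometrically, the content is just that any six-step path alternating between $X$-moves, $Y$-moves, and basepoint jumps can be re-routed as one $X$-leg, one jump, and one $Y$-leg, after enlarging $R$ and $S$ within their own coarse structures.
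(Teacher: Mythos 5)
Your proposal is correct and follows essentially the same route as the paper: both take the downward closure of the family of template relations $T(R,S)=(R_r\circ U\circ S_r)\cup(S_r\circ U\circ R_r)$, verify that it is a coarse structure containing the generators $\mathcal{X}\cup\mathcal{Y}\cup\{(x_0,y_0)\}$, and conclude by minimality of $\mathcal{X}+\mathcal{Y}$. The paper leaves the closure-under-composition step as a ``straightforward computation,'' while you make it explicit with $\tilde R=R\cup R'\cup RR'\cup R'R$ and $\tilde S=S\cup S'\cup SS'\cup S'S$, and a direct case check confirms that these choices work.
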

\begin{proof}
Let $\mathcal{Z}_0$ be the set of all relations of the form
$$
(R_r \circ U \circ S_r) \cup (S_r \circ U \circ R_r),
$$
for $R \in \mathcal{X}$ and $S \in \mathcal{Y}$, and let $\mathcal{Z}_1$ be its closure under taking subrelations. Since $\mathcal{X}, \mathcal{Y} \subseteq \mathcal{Z}_1 \subseteq \mathcal{X} + \mathcal{Y}$, it is enough to show that $\mathcal{Z}_1$ is a coarse structure by minimality of $\mathcal{X} + \mathcal{Y}$. Some straightforward computations show that $\mathcal{Z}_0$ is closed under composition of relations, and the result follows. 
\end{proof}

Since condition $\mathsf{(C)}$ in Section~\ref{SecHigson} was stated in the language of coarse coproducts, coarse maps and closeness of maps, the definition extends to the case of general coarse spaces. Theorem~\ref{disconthm} generalizes partially to this setting, as shown in the following theorem. The proof of the theorem is a straightforward adaptation of the proof of Theorem~\ref{disconthm} now that we have Lemma~\ref{techcopr}, so we omit it.

\begin{Theorem}\label{absdisconthm}
For a connected coarse space $(X, \mathcal{X})$, the following are equivalent:
\begin{itemize}
\item[(a)] $X$ doesn't satisfy $\mathsf{(C)}$;
\item[(b)] there are two unbounded subsets $A$ and $B$ of $X$ such that
\begin{itemize}
\item $X = A \cup B$, and
\item for any $R \in \mathcal{X}$, there is a bounded set $C_R$ such that if $xRx'$ for $x, x' \in X \setminus C_R$, then $\{x, x'\}$ intersects at most one of $A$ and $B$,
\end{itemize}
\item[(c)] is (bijectively) coarsely equivalent to a coarse coproduct $(Y + Z, \mathcal{Y} + \mathcal{Z})$ where neither $Y$ nor $Z$ is bounded.
\end{itemize}
\end{Theorem}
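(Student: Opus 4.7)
The plan is to mimic the proof of Theorem~\ref{disconthm}, cycling through the implications (a) $\Rightarrow$ (b) $\Rightarrow$ (c) $\Rightarrow$ (a), with Lemma~\ref{techcopr} playing the role of the explicit metric formula for $d_{X+Y}$ used in the metric case.

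For (a) $\Rightarrow$ (b), take a coarse map $f \colon X \to Y + Z$ witnessing the failure of $\mathsf{(C)}$ and set $A = f^{-1}(Y)$, $B = f^{-1}(Z)$. One first checks that $A$ and $B$ are unbounded: if $A$ were bounded, the map equal to $f$ on $B$ and constantly $z_0$ on $A$ would be coarse (properness uses boundedness of $A$) and would factor $f$ through $\iota_Z$ up to closeness. Now given $R \in \mathcal{X}$, the relation $(f \times f)(R)$ lies in $\mathcal{Y} + \mathcal{Z}$, so by Lemma~\ref{techcopr} it is contained in $(R'_r \circ U \circ S'_r) \cup (S'_r \circ U \circ R'_r)$ for some $R' \in \mathcal{Y}$ and $S' \in \mathcal{Z}$. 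Unpacking the composition shows that whenever $(x, x') \in R$ with $x \in A$ and $x' \in B$, both $f(x)$ and $f(x')$ lie in explicit bounded sets around $y_0$ and $z_0$. Taking $C_R$ to be the $f$-preimage of their union gives the required witness via properness.

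For (b) $\Rightarrow$ (c), applying the hypothesis to $\Delta \in \mathcal{X}$ shows $A \cap B$ is bounded, so after replacing $B$ by $B \setminus A$ we may assume $A$ and $B$ are disjoint and still unbounded. Set $W = A + B$, the coarse coproduct with base points $a_0 \in A$ and $b_0 \in B$; the identity $W \to X$ is bornologous by the universal property. For the identity $X \to W$: given $R \in \mathcal{X}$, let $\tilde R = R \cup R^{-1} \cup \Delta$ and let $C$ be the bounded set supplied by (b) for $\tilde R$, enlarged to contain $a_0$ and $b_0$. Boundedness of $C$ implies its $\tilde R$-neighborhood $\tilde R(C)$ is bounded, hence so is $D = C \cup \tilde R(C)$, so $D \cap A$ and $D \cap B$ are bounded in $A$ and $B$ respectively. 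Define
$$R' = (R \cap A^2) \cup ((D \cap A) \times \{a_0\}) \cup (\{a_0\} \times (D \cap A)) \cup \Delta_A \in \mathcal{A},$$
and $S' \in \mathcal{B}$ analogously. A case analysis on which of $A^2$, $B^2$, $A \times B$, $B \times A$ a pair $(x, x') \in R$ lies in, invoking the defining property of $C$ in the mixed cases, shows that $(x, x') \in (R'_r \circ U \circ S'_r) \cup (S'_r \circ U \circ R'_r)$, which lies in $\mathcal{W}$ by Lemma~\ref{techcopr}.

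For (c) $\Rightarrow$ (a), since $\mathsf{(C)}$ is a categorical condition it is coarse-invariant, so it suffices to show the identity on a coarse coproduct $Y + Z$ of two unbounded connected coarse spaces does not factor up to closeness through either injection. If $g \colon Y + Z \to Y$ were bornologous with $\iota_Y \circ g$ close to the identity, then $\{(g(z), z) : z \in Z\}$ would lie in $\mathcal{Y} + \mathcal{Z}$; Lemma~\ref{techcopr} together with $g(z) \in Y$ and $z \in Z$ forces $z$ into a bounded subset of $Z$, contradicting unboundedness. A symmetric argument rules out factorization through $\iota_Z$. The main obstacle is the bornologousness of the identity $X \to W$ in step (b) $\Rightarrow$ (c): where the metric proof relies on a single triangle-inequality bound involving the diameter of $C_R$, the abstract setting requires explicit witnesses $R' \in \mathcal{A}$ and $S' \in \mathcal{B}$ for each $R \in \mathcal{X}$, for which one needs both hypothesis (b) and the observation that $\tilde R$-neighborhoods of bounded sets remain bounded in a connected coarse space.
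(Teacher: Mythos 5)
Your proof is correct and follows essentially the same route as the paper's: the cycle (a) $\Rightarrow$ (b) $\Rightarrow$ (c) $\Rightarrow$ (a) with $A = f^{-1}(Y)$, $B = f^{-1}(Z)$, Lemma~\ref{techcopr} standing in for the explicit coproduct metric, and the bounded set $C_R$ absorbed into relations anchored at the base points. The only differences are presentational -- you build the witnessing relations $R'$, $S'$ explicitly where the paper instead chooses a reflexive $S$ with $C_R \subseteq \{x \mid xSa_0 \wedge xSb_0\}$, and you spell out (c) $\Rightarrow$ (a), which the paper declares obvious.
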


\begin{Example}
Let $X$ be any infinite set and let $\mathcal{X}$ be the smallest connected coarse structure on $X$. In other words, $\mathcal{X}$ consists of all finite subsets of $X \times X$ together with all relations of the form $R \cup \Delta$, where $R$ is a finite subset of $X \times X$. It is easy to show that $(X, \mathcal{X})$ does not satisfy $\mathsf{(C)}$ (using for example (b) from Theorem~\ref{absdisconthm} and noting that finite sets are always bounded in a connected coarse space), and thus has a disconnected Higson corona.
\end{Example}

The above example shows that we cannot hope for a version of (d) from Theorem~\ref{disconthm} to appear in the above theorem. Indeed, take $X$ to be any uncountable set and $\mathcal{X}$ the smallest connected coarse structure on $X$. Then there are no proper maps from $(X, \mathcal{X})$ to $\mathbb{Z}$.

A map $f: (X, \mathcal{X}) \rightarrow Y$ from a coarse space to a metric space $Y$ is called \emph{slowly oscillating} if for every $R \in \mathcal{X}$ and $\varepsilon > 0$, there exists a bounded set $K \subseteq X$ such that for any $a,b \in X$, $a, b \notin K$,
$$
aRb \Rightarrow d(f(a), f(b)) \leq \varepsilon.
$$
This, together with a similar definition for tending to $0$ at infinity, allows one to define the  $C^\ast$-algebra $B_h(X)/B_0(X)$ for any coarse space, and consequently also the Higson corona~\cite{RoeLectures}. We have the following generalization of the main theorem.

\begin{Theorem}\label{disconcoarse1}
The following are equivalent for a connected coarse space $(X, \mathcal{X})$.
\begin{itemize}
\item[(a)] $(X, \mathcal{X})$ satisfies $\mathsf{(C)}$;
\item[(b)] the Higson corona of $X$ is (topologically) connected;
\item[(c)] $B_h(X)/B_0(X)$ does not contain a non-trivial idempotent element.
\end{itemize}
\end{Theorem}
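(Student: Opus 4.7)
The plan is to follow the proof of Theorem~\ref{discon1} almost verbatim, replacing each ingredient by the abstract-coarse-space counterpart made available by Lemma~\ref{techcopr} and Theorem~\ref{absdisconthm}. The equivalence (b)~$\Leftrightarrow$~(c) is immediate from Lemma~\ref{algcon} (which is purely a statement about unital commutative $C^\ast$-algebras), so the real work lies in (a)~$\Leftrightarrow$~(b).

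For (b)~$\Rightarrow$~(a), I first prove an abstract-coarse-space version of Lemma~\ref{slowcop}: for $f : X \to \mathbb{C}$ and $g : Y \to \mathbb{C}$, the function $h : X + Y \to \mathbb{C}$ which restricts to $f$ and $g$ is slowly oscillating (resp.~tends to zero at infinity) on the coarse coproduct if and only if both $f$ and $g$ are. One direction is trivial since $\iota_X$ and $\iota_Y$ are bornologous. For the nontrivial direction, given $R \in \mathcal{X} + \mathcal{Y}$ and $\varepsilon > 0$, apply Lemma~\ref{techcopr} to replace $R$ by a relation of the form $(R'_r \circ U \circ S'_r) \cup (S'_r \circ U \circ R'_r)$ with $R' \in \mathcal{X}$, $S' \in \mathcal{Y}$; then enlarging the bounded sets supplied by slow oscillation of $f$ and $g$ (at scales $R'$, $S'$ and threshold $\varepsilon/2$) by a small neighbourhood of the two base points, any pair related by the enlarged relation outside this set lies either entirely in $X$ or entirely in $Y$, and the estimate reduces to one on each summand. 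From this lemma the analog of Proposition~\ref{Higsoncop} follows by the same $C^\ast$-algebra argument. Finally, $\nu X \neq \emptyset$ whenever $X$ is unbounded connected, since the constant function $1$ then represents a nonzero class in $B_h(X)/B_0(X)$, whose spectrum is consequently nonempty. Combining these: if $X$ fails~$\mathsf{(C)}$, Theorem~\ref{absdisconthm} gives a bijective coarse equivalence $X \sim Y + Z$ with $Y, Z$ unbounded, whence $\nu X \cong \nu Y + \nu Z$ is a coproduct of two nonempty compact Hausdorff spaces and hence disconnected.

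For (a)~$\Rightarrow$~(c), I argue by contraposition, mimicking the metric proof. Given a nontrivial idempotent $[f] \in B_h(X)/B_0(X)$, the representative $f$ is slowly oscillating in the coarse-space sense, and $f^2 - f$ tends to zero at infinity, while $[f] \neq [0], [1]$. Choose a bounded $K \subseteq X$ with $f(X \setminus K) \subseteq B(0, 1/4) \cup B(1, 1/4)$ and set $A = f^{-1}(B(0, 1/4))$, $B = X \setminus A$; the nontriviality of $[f]$ forces $A$ and $B$ unbounded. For each $R \in \mathcal{X}$, apply slow oscillation with $\varepsilon = 1/4$ to obtain a bounded $C_R \supseteq K$ such that $xRx'$ and $x, x' \notin C_R$ imply $|f(x) - f(x')| \leq 1/4$; then $x$ and $x'$ necessarily lie in the same one of $A$, $B$. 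This is precisely condition~(b) of Theorem~\ref{absdisconthm}, contradicting~$\mathsf{(C)}$.

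The main obstacle is the coproduct-preservation step. In the metric setting the explicit formula $d_{X+Y}(a, b) = d_X(a, x_0) + 1 + d_Y(y_0, b)$ let one separate a ``cross'' pair into a piece in $X$, a jump across the base points, and a piece in $Y$; in the abstract setting this structural split is supplied exactly by Lemma~\ref{techcopr}, and threading the slow-oscillation estimate through the composition $R_r \circ U \circ S_r$ requires some care (in particular, one must enlarge the exceptional bounded set to absorb the single $U$-jump so that the intermediate point sits in either $X$ or $Y$ consistently with its neighbours). Once this is in place, every other step of the metric-case argument transfers with only notational changes.
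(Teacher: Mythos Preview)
Your proposal is correct and follows essentially the same approach as the paper, which simply says the proof is ``an easy adaptation of the metric case now that we have Theorem~\ref{absdisconthm}'' and notes that finite unions of bounded sets are bounded in connected coarse spaces. You have spelled out that adaptation in detail---in particular the use of Lemma~\ref{techcopr} to prove the coarse-space analogue of Lemma~\ref{slowcop}, and the translation of the idempotent argument via part~(b) of Theorem~\ref{absdisconthm}---which is precisely what the paper leaves to the reader.
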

\begin{proof}
This is an easy adaptation of the metric case now that we have Theorem~\ref{absdisconthm} (and in particular, part (b)). Note that for connected coarse spaces, the finite union of bounded sets is bounded.
\end{proof}

\section*{Acknowledgements}
This paper is based on research done while the author was a PhD student under the supervision of Jerzy Dydak at the University of Tennessee.

\end{document}